\newcommand{\R}[1]{\mathbb{R}^{#1}}
\renewcommand{\S}[1]{\mathbb{S}^{#1}}
\newcommand{\cC}{\mathcal C}
\newcommand{\cF}{\mathcal F}
\newcommand{\cH}{\mathcal H}
\newcommand{\cL}{\mathcal L}
\newcommand{\cM}{\mathcal M}
\newcommand{\cO}{\mathcal O}
\newcommand{\bulk}{\mathrm{bulk}}
\newcommand{\surface}{\mathrm{surf}}
\newcommand{\Rb}         {\mathbb{R}}
\renewcommand{\geq}{\geqslant}
\renewcommand{\leq}{\leqslant}
\newcommand{\longrightharpoonup}{\relbar\joinrel\rightharpoonup}
\newcommand{\wsto}{\stackrel{*}{\rightharpoonup}}
\newcommand{\weakst}{\stackrel{*}{\longrightharpoonup}}
\newcommand{\weakstH}{\stackrel[H]{*}{\longrightharpoonup}}
\newcommand{\pweak}{\stackrel{p}{\longrightharpoonup}}
\newcommand{\average}{{\mathchoice {\kern1ex\vcenter{\hrule
height.4pt width 8pt depth0pt}
\kern-11pt} {\kern1ex\vcenter{\hrule height.4pt width 4.3pt
depth0pt} \kern-7pt} {} {} }}
\newcommand{\res}{\mathop{\hbox{\vrule height 7pt width .5pt depth
0pt\vrule height .5pt width 6pt depth0pt}}\nolimits}
\mathchardef\emptyset="001F
\providecommand{\U}[1]{\protect\rule{.1in}{.1in}}
\numberwithin{equation}{section}
\newtheorem{definition}{Definition}[section]
\newtheorem{theorem}[definition]{Theorem}
\newtheorem{corollary}[definition]{Corollary}
\theoremstyle{definition} {\newtheorem{remark}[definition]{Remark}}
\def\debaixodaseta#1#2{\mathrel{}\mathop{\longrightarrow}\limits^{#1}_{#2}}
\title[Relaxation for multi-level structured deformations]{Multi-level structured deformations: relaxation via a global method approach}%
\author[A.~C.~Barroso]{Ana Cristina Barroso}
\address[A.~C.~Barroso]{Departamento de Matem\'atica and CMAFcIO, 
Faculdade de Ci\^encias da Universidade de Lisboa,
Campo Grande, Edif\' \i cio C6, Piso 1,
1749-016 Lisboa, Portugal}
\email{acbarroso@ciencias.ulisboa.pt}
\author[J.~Matias]{Jos\'{e} Matias}
\address[J.~Matias]{Centro de Análise Matemática, Geometria e Sistemas Dinâmicos, Departamento de Matem\'atica, Instituto Superior T\'ecnico, Universidade de Lisboa, Av.~Rovisco Pais 1, 1049-001 Lisboa, Portugal}
\email{jose.c.matias@tecnico.ulisboa.pt}
\author[E.~Zappale]{Elvira Zappale}
\address[E.~Zappale]{Dipartimento di Scienze di Base ed Applicate per l'Ingegneria, Sapienza - Universit\`{a} di Roma, Via Antonio Scarpa, 16, 00161 Roma, Italy and
CIMA, Universidade de \'Evora, Portugal}
\email{elvira.zappale@uniroma1.it}
\date{\today}
\subjclass[2010]
{49J45, 46E30, 74A60, 74M99, 74B20.}
\keywords{global method for relaxation, hierarchical system of structured deformations, multi-scale geometry, disarrangements, integral representation}
\begin{document}

\begin{abstract}
We present some relaxation and integral representation results
for energy functionals in the setting of structured deformations, with special emphasis given to the case of multi-level structured deformations. In particular, we 
present an integral representation result for an abstract class of variational functionals in this framework via a global method for relaxation and identify,
under quite general assumptions, the corresponding relaxed energy densities through the study of a related local Dirichlet-type problem.

Some applications to specific relaxation problems are also mentioned, showing that our global method approach recovers some previously established results.
\end{abstract}

\maketitle

\allowdisplaybreaks

\tableofcontents


\section{Introduction}

The purpose of this work is to give an overview of some relaxation and integral representation results in the context of structured deformations.
The main focus is to present, and give an improvement, of some recent findings, obtained in \cite{BMZ2024}, where we established a global method for relaxation that is applicable in the context of multi-level structured deformations.
We also provide several applications that show that our approach covers, and in some
cases actually improves, in a unified way, various results available in the literature in the setting of (first-order) structured deformations.

The aim of the theory of (first-order) structured deformations 
(see \cite{DPO1993}) is to describe the effects, at the macroscopic level, of both smooth and non-smooth geometrical changes that a body may undergo and that occur at one sub-macroscopic level. A further step in this theory was undertaken in \cite{DO2019}, where the notion of a hierarchical system of structured deformations was introduced in order to include the effects, at the macro-level, of geometrical changes occurring at more than one sub-macroscopic level. We refer to Sections \ref{SDdef} and \ref{mlsd} for more details.

Since the seminal papers of Del Piero \& Owen \cite{DPO1993}, where structured deformations were originally introduced, and of Choksi \& Fonseca \cite{CF1997}, where the variational formulation for (first-order) structured deformations
in the $SBV$ setting was first addressed,
the theory has known many generalisations and extensions. 

For example, the notion of second-order structured deformations was introduced by Owen \& Paroni \cite{OP2000}, in order 
to describe curvature and bending effects associated with jumps in gradients.
Lower semi-continuity, relaxation and integral representation results in several functional spaces, such as $BH$, $SBH$ and $SBV^2$, and with respect to different topologies, have been obtained in 
\cite{ BMS2012, BMMO2017, FHP, H, S2015, S17}, for example.

Further contributions include applications to dimension reduction problems
as in \cite{CMMO} and to homogenisation \cite{AMMZ}. A recent article by
Kr\"omer, Kru\v zík, Morandotti \& Zappale \cite{KKMZ} extends the notion of structured deformations to so-called measure structured deformations.
We also refer to the survey by Matias, Morandotti \& Owen
\cite{book} where an overview of many of these topics may be found.

The global method for relaxation that we proposed in \cite{BMZ2024} allows us to provide an integral representation for a class of functionals defined in the set of $(L+1)$-level (first-order) structured deformations, 
$(g, G_1, \ldots, G_{L}) \in HSD_L^p(\Omega)$, where, for $L \in \mathbb N$ and 
$p \geq 1$, 
\begin{equation}\label{hsd}
HSD_L^p(\Omega):= SBV(\Omega;\mathbb R^{d})\times \underbrace{L^p(\Omega;\mathbb R^{d\times N})\times\cdots\times L^p(\Omega;\mathbb R^{d\times N})}_{L\text{-times}},
\end{equation}
and to identify the corresponding relaxed energy densities, under a set of assumptions that is quite general.
This is achieved through the study of a related local Dirichlet-type problem,
following the ideas of the global method for relaxation introduced in the space $BV$
by Bouchitté, Fonseca \& Mascarenhas in \cite{BFM1998}. In the present paper, we show that the ideas in \cite{BMZ2024} work similarly in the space
\begin{equation*}
SD_{L,p}(\Omega):= SBV(\Omega;\mathbb R^{d})\times 
\underbrace{L^1(\Omega;\mathbb R^{d\times N})\times\cdots\times 
L^1(\Omega;\mathbb R^{d\times N})}_{(L-1)\text{-times}} \times 
L^p(\Omega;\mathbb R^{d\times N}),
\end{equation*}
which is relevant for some applications (see Section \ref{app}).

Throughout this paper, 
\begin{itemize}
\item $\Omega \subset \mathbb R^{N}$ is a bounded, connected, open set;
\item $\mathbb S^{N-1}$ denotes the unit sphere in $\mathbb R^N$;
\item $Q\coloneqq (-\tfrac12,\tfrac12)^N$ denotes the open unit cube of 
$\mathbb R^{N}$ centred at the origin; for any $\nu\in\mathbb S^{N-1}$, $Q_\nu$ denotes any open unit cube in $\mathbb R^{N}$ with two faces orthogonal to $\nu$;  for any $x\in\mathbb R^{N}$ and $\varepsilon>0$, $Q(x,\varepsilon)\coloneqq x+\varepsilon Q$ denotes the open cube in $\mathbb R^{N}$ centred at $x$ with side length $\varepsilon$ and $Q_\nu(x,\varepsilon)\coloneqq x+\varepsilon Q_\nu$;
\item ${\mathcal O}(\Omega)$ is the family of all open subsets of $\Omega $, whereas ${\mathcal O}_\infty(\Omega)$ is the family of all open subsets of $\Omega $ with Lipschitz boundary; 
\item $L^p(\Omega;\mathbb R^{d\times N})$ is the set of matrix-valued $p$-integrable functions; 
\item $\mathcal M(\Omega;\mathbb R^{d\times N})$ is the set of finite matrix-valued Radon measures on $\Omega$; $\mathcal M ^+(\Omega)$ is the set of non-negative finite Radon measures on $\Omega$;
given $\mu\in\mathcal M(\Omega;\mathbb R^{d\times N})$,  
the measure $|\mu|\in\mathcal M^+(\Omega)$ 
denotes the total variation of $\mu$;
\item $SBV(\Omega;\mathbb R^d)$ is the set of vector-valued \emph{special functions of bounded variation} defined on $\Omega$. 
Given $u\in SBV(\Omega;\mathbb R^d)$, its distributional gradient $Du$ admits the decomposition 
$$Du=D^au+D^su=\nabla u\cL^N+[u]\otimes\nu_u\mathcal H^{N-1}\res S_u,$$ 
where $S_u$ is the jump set of~$u$, $[u]$ denotes the jump of~$u$ on $S_u$, and $\nu_u$ is the unit normal vector to $S_u$.
\end{itemize}

The paper is organised in the following way. In Section \ref{SDdef} we recall the notion of structured deformation, first in the $L^{\infty}$ setting, as introduced by Del Piero $\&$ Owen in \cite{DPO1993}, and then its extension to the $SBV$ context by Choksi $\&$ Fonseca \cite{CF1997}. Section \ref{CFrel} is devoted to recalling the relaxation result obtained in \cite{CF1997} and later generalised to the inhomogeneous case in \cite{MMOZ}. 
Based on \cite{DO2019}, in Section \ref{mlsd} we recall the notion of a multi-level structured deformation, as well as the notion of convergence of a (multi-indexed) sequence to such a deformation, and a version of the approximation theorem in this case. In Section \ref{gm} we revisit the global method for relaxation for multi-level structured deformations proved in \cite{BMZ2024} and provide a generalisation to the case where the field $G_L$ may exhibit a different integrability behaviour from the remaining fields $(G_1, \ldots, G_{L-1})$.
Finally, Section \ref{app} contains some applications of the results of the previous section.

\section{Notion of (first-order) structured deformations}\label{SDdef}

\subsection{The notion of Del Piero \& Owen}\label{DPO}

First-order structured deformations were introduced by Del Piero $\&$ Owen in \cite{DPO1993} to address problems in non-classical deformations of continua (for instance, study of equilibrium configurations of crystals with defects) where an analysis at both the macroscopic and a microscopic level is required. This theory
provides a mathematical framework that captures the effects at the ma\-cros\-co\-pic level of both smooth deformations and of non\--\-smooth deformations, the so-called disarrangements, that occur at one sub-macroscopic level.

In the classical theory of mechanics, the deformation of the body is characterised exclusively by the ma\-cros\-co\-pic deformation field, $g$, and its gradient, $\nabla g$.
However, in the framework of structured deformations, an additional geometrical field, $G$, is introduced in order to capture the contributions at the ma\-cros\-co\-pic scale of smooth sub-macroscopic geometrical changes such as stretching, shearing and rotation. This broad theory can address phenomena such as elasticity, plasticity and the behaviour of crystals with defects.

Precisely, a first-order structured deformation from a region $\Omega \subset \mathbb{R}^N$ is a triple $(K,g, G)$ where
$g: \Omega \to \Rb^d$ is the macroscopic deformation field and accounts for
macroscopic changes in the geometry of the body, this field is assumed to be injective and piecewise smooth,
$G : \Omega \to \Rb^{d\times N}$ is the geometrical field mentioned above, it is assumed to be piecewise continuous and, finally,
$K$ is a surface-like subset of the body that describes pre-existing unopened macroscopic cracks.

In addition, the following accommodation inequality holds
$$0 < c \leq \det G(x) \leq \det\nabla g(x), 
\; \forall x \in \Omega.$$
As the geometric interpretation of $G$, below, shows, $\det G$ is the volume change without disarrangements, whereas $\det \nabla g$ represents the macroscopic volume change, the inequality 
$\det G(x) \leq \det\nabla g(x)$ tells us that volume changes associated with smooth sub-macroscopic geometrical changes cannot exceed volume changes associated with smooth macroscopic geometrical changes. This condition is necessary to ensure that the $u_n$ approximating $g$ (see Theorem \ref{approxDPO} that follows) are injective and hence interpenetration of matter is avoided.

Crucial to the theory is the following approximation theorem obtained in 
\cite{DPO1993}.

\begin{theorem}[Approximation Theorem - Del Piero \& Owen]\label{approxDPO}
For each structured deformation $(K,g, G)$
there exists a sequence, $u_n : \Omega \to \Rb^d$, 
of injective and piecewise smooth mappings, 
such that 
$u_n\to g$ and $\nabla u_n\to G$, in $L^{\infty}$.
\end{theorem}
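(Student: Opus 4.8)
The plan is to reduce, by successive approximations, to a purely local affine model problem, and then to build $u_n$ cell by cell on a fine grid, using the accommodation inequality precisely where it is needed --- to rule out interpenetration of matter. Since the target is $L^\infty$ convergence, a diagonal argument lets us replace $(g,G)$ by simpler data: using that $g$ is piecewise smooth, $G$ is piecewise continuous, and that the inequalities $0<c\le \det G\le \det\nabla g$ are stable under small perturbations, one approximates $(g,G)$ --- away from an arbitrarily small neighbourhood of $K$ and of the finitely many interfaces of the partitions --- by a pair $(\tilde g,\tilde G)$ with $\tilde g$ piecewise affine and $\tilde G$ piecewise constant relative to a common finite polyhedral partition of $\Omega$, still satisfying an accommodation inequality with a slightly smaller constant. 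It then suffices to produce, for each such pair and each $\delta>0$, an injective piecewise affine (hence piecewise smooth) map $u$ with $\|u-\tilde g\|_\infty+\|\nabla u-\tilde G\|_\infty<\delta$.

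Next I would localise to a single polyhedral piece $P$, on which $\tilde g(x)=Mx+b$ and $\tilde G\equiv A$ with $0<c'\le\det A\le\det M$; post-composing with $M^{-1}$ reduces this to the model problem: given a cube $Q$ and a matrix $A$ with $0<c''\le\det A\le 1$, construct an injective piecewise affine $v:Q\to\mathbb R^N$ whose gradient is $A$ on the interior of each cell of a fine subdivision, with $\|v-\mathrm{id}\|_\infty$ small. For this, subdivide $Q$ into congruent subcubes $Q_\alpha$ of side $\eta$ with centres $z_\alpha$, and on $Q_\alpha$ set $v(x):=A(x-z_\alpha)+z_\alpha+t_\alpha$, so that $\nabla v\equiv A$ and $\|v-\mathrm{id}\|_\infty\le(|A|+1)\sqrt N\,\eta/2+\max_\alpha|t_\alpha|$. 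The image of $Q_\alpha$ is the parallelepiped $P_\alpha:=z_\alpha+t_\alpha+A(Q_\alpha-z_\alpha)$, of volume $\eta^N\det A\le\eta^N=|Q_\alpha|$, and the crux is to choose the offsets $t_\alpha=O(\eta)$ so that the $P_\alpha$ are pairwise disjoint: the disarrangements of the limiting deformation are then exactly the (vanishing, of order $\eta$) jumps of $v$ across the cell faces.

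The disjointness of the $\{P_\alpha\}$ is the one point where the hypothesis genuinely bites, and it is the main obstacle. The bound $\det A\le 1$ gives the necessary volumetric room --- the total image volume $(\det A)|Q|$ does not exceed $|Q|$ --- while $\det A\ge c''>0$ prevents the $P_\alpha$ from degenerating; but, the $P_\alpha$ having diameter $O(\eta)$ comparable to the grid spacing, a cell-by-cell placement will in general create overlaps (already for a fixed shear with $\det A=1$), and one must instead arrange the images in an offset, brick-like pattern subordinate to a lattice adapted to $A$, matching cells to bricks within distance $O(\eta)$ by a Hall-type marriage argument. (When $\det A<1$ strictly one has volume to spare and the packing is softer; the isochoric case $\det A=1$ is the delicate one.) This produces an injective $v$ on each $P$ with the stated estimates.

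Finally I would patch the pieces together: across the interfaces of the macroscopic partition $\tilde g$ is continuous, so adjacent pieces are made to agree up to a jump of order $\eta$, and global injectivity of the assembled map follows from the injectivity of $\tilde g$ together with the fact that the image of each piece lies within an $O(\eta)$-neighbourhood of $\tilde g(P)$, these being essentially disjoint since $\tilde g$ is injective and piecewise affine. Undoing the two reductions and running the diagonal argument over $(\tilde g,\tilde G)\to(g,G)$ and $\eta\to 0$ produces the desired sequence $u_n$, with $u_n\to g$ and $\nabla u_n\to G$ in $L^\infty$; verifying these convergences is routine from the construction, and the only essential use of the accommodation inequality is the packing step above.
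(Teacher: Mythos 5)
The paper itself offers no proof of Theorem~\ref{approxDPO}: it is quoted from \cite{DPO1993}, and the only approximation result actually proved in the text is the $SBV$ analogue, Theorem~\ref{approxthmCF}, whose one-line argument via Alberti's theorem is unavailable here because you must produce \emph{injective} maps converging in $L^{\infty}$. So your proposal can only be measured against the original argument of Del Piero \& Owen. Your architecture --- reduce to piecewise affine $\tilde g$ and piecewise constant $\tilde G$, localise to one polyhedral piece, factor out $\nabla\tilde g$ to reach the model problem ``identity versus a constant matrix $A$ with $0<\det A\le 1$'', then build a cell-wise affine staircase whose images must be packed disjointly --- captures the right mechanism, and you correctly isolate where the accommodation inequality bites: the volume bound $\det A\le1$ in the packing step. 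The Hall-type matching you invoke does work (match each cell $Q_\alpha$ to a tile of the lattice tiling $\{AQ_\beta\}$; Hall's condition follows from the covolume comparison $\eta^N\det A\le\eta^N$, and any tile meeting a cell lies within $O(\eta)$ of it), though this is the crux and you leave it as a sketch. For comparison, Del Piero \& Owen avoid the packing problem entirely by factoring $A$ into elementary shears and a single diagonal contraction, each of which admits an explicit ``deck of cards'' staircase that is injective for free, and then composing the approximations; your route is more direct but puts all the weight on the combinatorial step.

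The genuine gap is in the final patching step. You assert global injectivity because the image of each macroscopic piece $P$ lies in an $O(\eta)$-neighbourhood of $\tilde g(P)$ and these neighbourhoods are ``essentially disjoint since $\tilde g$ is injective''. They are not: if $P$ and $P'$ share a face $F$, then $\tilde g(P)$ and $\tilde g(P')$ share the face $\tilde g(F)$, so their $O(\eta)$-neighbourhoods overlap in a full $O(\eta)$-collar of $\tilde g(F)$ --- precisely where the boundary cells of both pieces send their images --- and since the two packings come from unrelated tilings (by $A$- and $A'$-parallelepipeds) nothing prevents collisions there. This must be repaired, for instance by triangulating so that each $\tilde g(P)$ is a convex polytope containing a ball $B(y_0,r)$ and post-composing the map on $P$ with the contraction $y\mapsto y_0+(1-C\eta)(y-y_0)$, which pulls the $O(\eta)$-neighbourhood of $\tilde g(P)$ strictly inside $\tilde g(P)$ while perturbing the gradient only by $O(\eta)$, so that $\nabla u_n\to G$ still holds. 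A smaller issue: excising neighbourhoods of $K$ and of the interfaces is incompatible with $L^{\infty}$ convergence on all of $\Omega$ as written; but since the $u_n$ are themselves only required to be piecewise smooth, no excision is needed --- work piece by piece on the closed pieces of smoothness of $g$ and let $u_n$ jump across the interfaces.
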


The sequence $u_n$ whose existence is guaranteed by the approximation theorem is referred to as a determining sequence. 

The above result provides a geometrical interpretation of the field $G$. Indeed, 
since the approximation theorem shows that it is a limit of gradients, 
$G : \Omega \to \Rb^{d\times N}$ is not influenced by any 
discontinuities associated with the piecewise smooth mappings $u_n$ so it is called
the deformation without disarrangements.

On the other hand, the difference $\nabla g-G$  captures the contribution at the macroscopic scale of non-smooth 
changes, such as slips and separations (which are called disarrangements),
that take place at a smaller length scale, so  $\nabla g-G$ is called the deformation due to disarrangements. In fact, as was proved by Del Piero \& Owen \cite{DPO1995}, it turns out that at every point where $g$ is differentiable and $G$ is continuous,
the following equality holds
$$\lim_{r\to 0^+} \lim_{n \to +\infty} 
\frac {\displaystyle \int_{S(u_n)\cap B(x, r)} [u_n](y) \otimes \nu_{u_n}(y) \, d \mathcal{H}^{N-1}(y)}{|B(x, r)|} =  \nabla g(x) - G(x),
$$
where $ [u_n](y) \otimes \nu_{u_n}(y)$ is the tensor product of the jump of $u_n$ at $y$ and the normal to the jump set of $u_n$ at $y$.
Thus, the difference $\nabla g(x) - G(x)$ is the limit of averages of the directed jumps $[u_n](y) \otimes \nu_{u_n}(y)$ in the approximating mappings. This shows that only the non-smooth part of $u_n$ affects the value of $M(x) := \nabla g(x) - G(x)$
which is, therefore, called the deformation due to disarrangements.

Hence we may write $\nabla g = G + M$, thus expressing the macroscopic deformation gradient as the sum of the term $G$, the deformation without disarrangements,
which is associated with limits of gradients of approximating deformations, and the term $M$, the deformation due to disarrangements, which is associated to their jump effects. 

Taking into account the approximation theorem, we also see that
$$M(x) := \nabla g(x) - G(x)= \nabla \lim u_n - \lim \nabla u_n,$$ 
stressing the fact that, in this context, the classical gradient and the limit do not commute.

\subsection{The notion of Choksi \& Fonseca}\label{CF}

Given a body undergoing internal stresses and external loads, 
a central problem in the calculus of variations is to identify its equilibrium configurations via an energy minimisation process. To this end, we need to understand what is meant by the energy of a structured deformation.
This question was first addressed by Choksi \& Fonseca \cite{CF1997} within the broader notion of structured deformation based on $SBV$ functions. Indeed, 
since the singular part of their distributional derivative is supported precisely on the set where the function has jump discontinuities, these fields are generalisations of piecewise smooth functions, and they are better suited to apply calculus of variations arguments.
 
Thus, the variational formulation for (first-order) structured deformations in the $SBV$ setting is based on the following notion 
(see \cite{CF1997}): a (first-order) structured deformation is a pair 
$(g,G)\in SBV(\Omega;\mathbb R^{d})\times L^1(\Omega;\mathbb R^{d\times N})$ 
where $g \in  SBV(\Omega; \Rb^d)$  is the macroscopic deformation and
$G \in  L^1(\Omega; \Rb^{d\times N})$  is the part of the deformation without disarrangements. 

In this case, the jump set $S_g$ can be viewed as the crack site, the role previously played by the surface-like set $K$.
Notice also that $\nabla u$ is no longer the classical gradient of a smooth field so it need not be curl-free.
  
Therefore, starting from a functional which associates to any deformation $u$ of the body an energy featuring a bulk contribution, that measures the deformation (gradient) throughout the whole body, and an interfacial contribution that accounts for the energy needed to fracture the body,
\begin{equation}\label{energydef}
E(u) = \int_{\Omega} W(\nabla u(x))\, dx + 
\int_{S_u}\psi([u](x), \nu_u(x))\, d{\mathcal H}^{N-1},
\end{equation}
for certain bulk and interfacial energy densities $W$ and $\psi$,
the question that now arises is how to assign an energy to a structured deformation $(g,G)$.

To provide an answer to this question, a counterpart of the approximation theorem of Del Piero \& Owen, with respect to a weaker topology, was obtained in \cite{CF1997}.  Its proof is based on the fact that $BV$ functions can be approximated in the $L^1$-norm by sequences of piecewise constant functions and on the following well-known result due to Alberti \cite{AL}.

\begin{theorem}[Alberti]\label{Alb}
Given $f \in L^1(\Omega; \Rb^{d\times N})$,
there exists $u \in SBV (\Omega; \Rb^d)$ such that
$\nabla u = f$, ${\mathcal L}^N$ a.e. in $\Omega$,
$|D^su|(\Omega) \leq C \|f\|_{L^1(\Omega; \Rb^{d\times N})}$ and
$\|u\|_{L^1(\Omega; \Rb^d)}\leq C\|f\|_{L^1(\Omega; \Rb^{d\times N})}.$
\end{theorem}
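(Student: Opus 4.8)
The plan is to reduce to the case of piecewise constant $f$ and then pass to a limit, exploiting that the statement is componentwise linear in $f$; thus I may assume $d=1$ and, after extending $f$ by zero, that $\Omega$ sits inside a large open cube of $\mathbb R^N$.

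\emph{Step 1: constant coefficients.} First I would solve the problem when $f\equiv c\in\mathbb R^N$ is constant on a cube $Q(x_0,r)$, with the extra feature that $\|u\|_{L^\infty}$ can be made as small as desired. One uses an explicit sawtooth: writing $c=(c_1,\dots,c_N)$, set $u(x)=\sum_{j=1}^N\phi_{\delta,c_j}(x_j)$, where $\phi_{\delta,c_j}$ is the $\delta$-periodic function of one real variable with slope $c_j$ on each period and a jump of size $-c_j\delta$ at the endpoints of each period. Then $\nabla u=c$ a.e., $\|u\|_{L^\infty}\le N\delta|c|$, and, counting the jump hyperplanes $\{x_j\in\delta\mathbb Z\}$ meeting $Q(x_0,r)$, one gets $|D^su|(Q(x_0,r))\le C_N r^N|c|=C_N\|f\|_{L^1(Q(x_0,r))}$ with $C_N$ depending only on $N$, while $\|u\|_{L^1}\le N\delta|c|\,r^N\to 0$ as $\delta\to 0$. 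Partitioning $\Omega$ (up to a null set) into finitely many cubes on which $f$ is constant and gluing the corresponding sawtooths, the same three estimates persist for an arbitrary piecewise constant $f$: the new jumps created along the cube interfaces have total size at most $2N\delta\max|c|$ times the finite surface measure of the grid, hence are absorbed into the constant once $\delta$ is small.

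\emph{Step 2: density and summation.} For general $f\in L^1(\Omega;\mathbb R^{d\times N})$, I would pick piecewise constant $f_n$ with $f_0\equiv 0$ and $\|f_n-f\|_{L^1}\le 2^{-n}\|f\|_{L^1}$, so that $\sum_n\|f_{n+1}-f_n\|_{L^1}\le C\|f\|_{L^1}$. Applying Step 1 to each difference $f_{n+1}-f_n$ yields $w_n\in SBV(\Omega;\mathbb R^d)$ with $\nabla w_n=f_{n+1}-f_n$ a.e., $|D^sw_n|(\Omega)\le C_N\|f_{n+1}-f_n\|_{L^1}$ and $\|w_n\|_{L^1}\le 2^{-n}\|f\|_{L^1}$ (this last bound is free, by Step 1). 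Since $|Dw_n|(\Omega)\le(1+C_N)\|f_{n+1}-f_n\|_{L^1}$, the series $\sum_n Dw_n$ converges absolutely in $\mathcal M(\Omega;\mathbb R^{d\times N})$, hence $u:=\sum_n w_n$ converges in $L^1(\Omega;\mathbb R^d)$, $u\in BV$, and $Du=\sum_n Dw_n$. Passing to absolutely continuous parts gives $\nabla u=\sum_n(f_{n+1}-f_n)=f$ in $L^1$, by telescoping and $f_n\to f$; passing to singular parts gives $D^cu=\sum_n D^cw_n=0$ because every $w_n\in SBV$, so $u\in SBV$, with $|D^su|(\Omega)\le\sum_n|D^sw_n|(\Omega)\le C_N\sum_n\|f_{n+1}-f_n\|_{L^1}\le C\|f\|_{L^1}$ and $\|u\|_{L^1}\le\sum_n\|w_n\|_{L^1}\le C\|f\|_{L^1}$.

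The parts needing genuine care are, first, the bookkeeping in Step 1 (the number and surface measure of the jump hyperplanes inside each cube, and the interface jumps between adjacent cubes of the partition), and, second, the limiting argument in Step 2: one must verify that the Cantor part of $u$ vanishes and that the $L^1$-convergent series of approximate gradients really produces $\nabla u=f$ and not $f$ plus a spurious singular term. The latter is exactly where the summability $\sum_n|Dw_n|(\Omega)<\infty$ is indispensable, since it forces termwise convergence of the full measures $Dw_n$, and with it the stability of the $SBV$ structure under the passage to the limit.
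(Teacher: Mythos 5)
The paper does not actually prove this statement: Theorem~\ref{Alb} is quoted as a classical result and attributed to \cite{AL}, so there is no internal proof to compare yours against. Judged on its own merits, your argument is essentially Alberti's original construction and it is correct: the one-dimensional sawtooth laminate for constant data (with the crucial extra feature that $\|u\|_{L^\infty}$ can be made arbitrarily small at no cost in the $|D^su|$ bound), the gluing along a cube grid with interface jumps of order $\delta$ times a fixed surface measure, and the telescoping series $u=\sum_n w_n$ with $\sum_n|Dw_n|(\Omega)\leq C\|f\|_{L^1}$ are exactly the right ingredients, and the bookkeeping you flag does close. The one step I would insist you rewrite is the identity $D^cu=\sum_n D^cw_n$: the Cantor part is not a manifestly linear and total-variation-continuous operation on derivatives of $BV$ functions, so it should not be invoked termwise. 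The correct route, which your summability hypothesis does deliver, is: since $\sum_n Dw_n$ converges absolutely in $\mathcal M(\Omega;\mathbb R^{d\times N})$ and each $D^sw_n$ is concentrated on the jump set $S_{w_n}$, the singular part $D^su=\sum_n D^sw_n$ is concentrated on $E:=\bigcup_n S_{w_n}$, a set that is $\sigma$-finite with respect to $\mathcal H^{N-1}$; since the Cantor part of the derivative of any $BV$ function vanishes on every such set, $D^cu=0$ and hence $u\in SBV(\Omega;\mathbb R^d)$. With that substitution the proof is complete; you are also right that mere $L^1$-convergence with uniformly bounded variations would not suffice (the Cantor--Vitali function is an $L^1$-limit of piecewise constant $SBV$ functions with bounded variation), which is precisely why the geometric rate $\|f_{n+1}-f_n\|_{L^1}\leq C\,2^{-n}\|f\|_{L^1}$, and with it the absolute convergence of $\sum_n Dw_n$, is the indispensable point.
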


The Choksi \& Fonseca approximation result reads as follows.

\begin{theorem}[Approximation Theorem - Choksi \& Fonseca]\label{approxthmCF}
Given $(g,G) \in SBV(\Omega; \Rb^d) \times L^1(\Omega; \Rb^{d\times N})$,
there exists a sequence $u_n \in SBV(\Omega; \Rb^d)$ such that 
$u_n \to g$ in $L^1$ and $\nabla u_n \weakst G$ 
in the sense of measures.
\end{theorem}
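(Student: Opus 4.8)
The plan is to construct the approximating sequence $u_n$ explicitly by combining two ingredients: a piecewise-constant approximation of the $SBV$ function $g$, and Alberti's theorem (Theorem~\ref{Alb}) applied to the field $G$. More precisely, first I would fix a sequence $g_n \in SBV(\Omega;\mathbb R^d)$ of piecewise constant functions with $g_n \to g$ in $L^1$; such a sequence exists because $BV$ functions are approximable in $L^1$ by piecewise constant functions, and one can arrange $g_n$ to be piecewise constant on polyhedral partitions of $\Omega$ whose mesh size tends to zero. Each such $g_n$ has $\nabla g_n = 0$ a.e. and a purely jump derivative. Next, I would apply Alberti's theorem to obtain $v \in SBV(\Omega;\mathbb R^d)$ with $\nabla v = G$ a.e., $|D^s v|(\Omega) \le C\|G\|_{L^1}$ and $\|v\|_{L^1}\le C\|G\|_{L^1}$. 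The natural candidate is then $u_n := g_n + \tfrac{1}{n}\,w_n$ where $w_n$ is an appropriately rescaled/oscillated version of $v$ chosen so that $\nabla w_n$ is a bounded sequence in $L^1$ that converges weakly-$*$ (as measures) to $n$ times what is needed — but the cleaner route, and the one I expect is actually used, is to subdivide each cell of the partition for $g_n$, put a rescaled copy of $v$ (shrunk to the cell) inside, so that on the fine scale $\nabla u_n$ reproduces $G$ while the $L^1$-smallness of the rescaled pieces guarantees $u_n \to g$ in $L^1$.

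The key steps, in order, would be: (1) reduce to $g$ piecewise constant by an approximation and diagonalisation argument, noting that $L^1$-convergence and weak-$*$ convergence of the gradient measures are both stable under the appropriate diagonalisation because $\|\nabla u_n\|_{L^1}$ stays bounded; (2) on a partition of $\Omega$ into small cubes $\{Q_i\}$ of side $\delta$ (with $\delta = \delta_n \to 0$), define $u_n$ on each $Q_i$ as the constant value of $g_n$ there plus a copy of $v$ rescaled to act at scale $\delta$, i.e. roughly $x \mapsto \delta\, v((x - x_i)/\delta)$ up to a further small multiplicative factor tuned to kill the $L^1$ error; (3) compute $\nabla u_n$ on each cell: the rescaling is chosen precisely so that $\nabla u_n(x) = G$-like data, and then check that $\int_{\Omega} |\nabla u_n| \le C\|G\|_{L^1}$ uniformly and that for every $\varphi \in C_0(\Omega;\mathbb R^{d\times N})$ one has $\int \varphi : \nabla u_n \, dx \to \int \varphi : G\, dx$ (a standard Riemann-sum / homogenisation argument using that the cell averages of $\nabla u_n$ approach the local averages of $G$); (4) verify $u_n \to g$ in $L^1$ from $g_n \to g$ plus the smallness of the added oscillating term; (5) confirm $u_n \in SBV$ with the jump parts under control, using $|D^s u_n| \le |D^s g_n| + (\text{rescaled } |D^s v|)$, which is bounded but need not vanish — consistent with the statement, which only claims weak-$*$ convergence of the full gradients $\nabla u_n$, i.e. of the absolutely continuous parts, not of $Du_n$.

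The main obstacle, and the point requiring care, is step~(3): arranging the geometry so that the rescaled interior copies of $v$ inside each cell genuinely produce $\nabla u_n \weakst G$ as measures while keeping $\|\nabla u_n\|_{L^1(\Omega)}$ bounded independently of $n$. One has to be slightly careful that the rescaling factor applied to $v$ is exactly the one for which the chain rule gives $\nabla(\delta\, v(\cdot/\delta)) = (\nabla v)(\cdot/\delta) = G(\cdot/\delta)$ restricted to the cell, so that the cellwise averages of $\nabla u_n$ reproduce a piecewise-constant approximation of $G$; then weak-$*$ convergence follows by a density argument testing against continuous functions and using boundedness in $L^1$ (equivalently, total mass bounded by $C\|G\|_{L^1}$, so no mass escapes). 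A secondary technical point is the diagonalisation in step~(1): one must simultaneously approximate $g$ by piecewise constants and handle the Alberti construction, but since all the relevant quantities ($L^1$ distance of $u_n$ to $g$, total variation of $\nabla u_n$, and the pairing against a countable dense family of test functions) can be made small or controlled for each fixed intermediate scale, a standard diagonal extraction closes the argument.
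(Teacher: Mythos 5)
There is a genuine gap in step (3): the rescaled cell construction does not produce $\nabla u_n \weakst G$. If on a cell $Q_i = x_i + \delta Q$ you set the oscillating part equal to $x \mapsto \delta\, v\bigl((x-x_i)/\delta\bigr)$, then its approximate gradient is $(\nabla v)\bigl((x-x_i)/\delta\bigr) = G\bigl((x-x_i)/\delta\bigr)$, a \emph{compressed copy of $G$ over a fixed unit cube}, the same in every cell. Hence every cell average of $\nabla u_n$ equals the single constant $\int_Q G(y)\,dy$, not the local average $\fint_{Q_i} G$, and the Riemann-sum argument gives $\nabla u_n \weakst \bigl(\int_Q G\,dy\bigr)\,\mathcal L^N$ rather than $G\,\mathcal L^N$. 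Your claim that ``the cellwise averages of $\nabla u_n$ reproduce a piecewise-constant approximation of $G$'' is exactly the point that fails: a homogenisation-type rescaling reproduces the mean of the template, not the local values of $G$. (The $L^1$-smallness in step (4) is fine; it is the gradient that comes out wrong.)

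The fix is to drop the rescaling entirely, and it collapses to the paper's one-line proof: apply Alberti's theorem not to $G$ but to $G - \nabla g$, obtaining $h \in SBV(\Omega;\Rb^d)$ with $\nabla h = G - \nabla g$; let $h_n$ be piecewise constant with $h_n \to h$ in $L^1$ (so $\nabla h_n = 0$); and set $u_n := g + h - h_n$. Then $\nabla u_n = \nabla g + (G - \nabla g) = G$ \emph{identically for every $n$}, so no weak-$*$ convergence argument, no Riemann sums, and no diagonalisation are needed, while $u_n - g = h - h_n \to 0$ in $L^1$. Note also that the paper keeps $g$ itself in the definition of $u_n$ (its absolutely continuous part $\nabla g$ is needed to cancel against $\nabla h$), whereas you replace $g$ by piecewise constants $g_n$ and then must manufacture all of $G$ from the added term; that choice is what forces you into the problematic oscillation construction.
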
 
\begin{proof}
Given $G \in L^1(\Omega; \Rb^{d \times N})$ and 
$g \in SBV(\Omega; \Rb^d)$, using Theorem \ref{Alb},  let 
$h \in SBV (\Omega; \Rb^d)$ be such that $\nabla h = G - \nabla g,$
and let $h_n$ be a piecewise constant approximation of $h$ in the $L^1$ norm.
Defining  $u_n = g + h - h_n$ it is immediate to see that  
$u_n \to g $ in $L^1$ and $\nabla u_n = G,$ thus satisfying the requirements of the statement.
\end{proof}

Denoting by $E(\cdot)$ the energy associated to any deformation of the body,
given in \eqref{energydef},
it would be natural to consider the limit $\lim E(u_n)$ as the energy
of the structured deformation $(g,G)$, where $u_n$ is a determining
sequence whose existence is ensured by Theorem \ref{approxthmCF}.
However, determining sequences are far from unique and the limit  $\lim E(u_n)$  might depend on the choice of the determining sequence, so we are lead to consider a
relaxation procedure.

Thus, due to the approximation theorem, and given the lack of uniqueness of approximating sequences, the energy associated with the structured deformation $(g,G)$ is defined as the most effective way to build up the deformation using approximations in $SBV$, i.e. 
as the relaxation, with respect to the topology considered in Theorem \ref{approxthmCF}, of $E(\cdot)$:
\begin{equation}\label{CFrelen}
I(g,G) =  \inf_{u_n\in SBV(\Omega; \Rb^d)} \left \{ \liminf_{n\to +\infty} E(u_n) : \,\,u_n\debaixodaseta {L^1}{}  g, \,\,  \nabla u_n \weakst G \right\},
\end{equation}
where $E(\cdot)$, given by \eqref{energydef},
is the energy assigned to each $u \in SBV(\Omega;\Rb^d)$.

An important feature of this relaxation problem is that the gradients of the approximating sequences $u_n$ are constrained to converge, in the sense of measures, to the given function $G$, which is not necessarily equal to $\nabla g$. 
On the other hand, since $u_n\debaixodaseta {L^1}{}  g$, then $Du_n \weakst Dg$, so 
$D^s u_n \weakst  Dg - G$, in the sense of measures. If there are no macroscopic cracks, i.e. if $g \in W^{1,1}$, then
$Dg = \nabla g$ so the difference between the macroscopic and the
microscopic bulk fields, $\nabla g - G$, is achieved by the limit of singular measures.

Moreover, by a compactness theorem in $SBV$ due to Ambrosio \cite{Amb}, we have that $\nabla g = G$,  $\mathcal{L}^N$ a.e., unless $\mathcal{H}^{N-1}(S(u_n)) \to +\infty$, that is, unless there is a diffusion of cracks whose amplitude is tending to zero.

As we will recall in the next section, in their seminal paper \cite{CF1997},
Choksi \& Fonseca obtained an integral representation formula for $I(g,G)$ and identified the corresponding relaxed energy densities under certain hypotheses on the original densities $W$ and $\psi$ (cf. Section \ref{CFrel}).


\section{Choksi \& Fonseca Relaxation}\label{CFrel}

Our aim in this section is to recall the integral representation formula for $I(g,G)$, obtained in \cite{CF1997}
in the homogeneous case, and generalised by Matias, Morandotti, Owen \& Zappale 
in \cite{MMOZ}, to the case of a uniformly continuous $x$ dependence of the densities
$W$ and $\psi$.

Let $p \geq 1$, let $\Omega \subset \mathbb R^N$ be a bounded, open set and consider the initial energy of $u\in SBV(\Omega;\mathbb R^d)$ defined by
\begin{equation}\label{Ex}
E(u):= \int_\Omega W(x,\nabla u(x))\,dx
+\int_{\Omega\cap S_u} \psi(x,[u](x),\nu_u(x))\,d\cH^{N-1}(x),
\end{equation}
where $W\colon\Omega\times\mathbb R^{d\times N}\to[0,+\infty)$ and $\psi\colon\Omega\times\mathbb R^{d}\times\mathbb S^{N-1}\to[0,+\infty)$ are continuous functions satisfying the following set of hypotheses:
\begin{enumerate}
\item[(W1)] ($p$-Lipschitz continuity) there exists $C_W >0$ such that, for all $x\in\Omega$ and $A_1,A_2 \in \mathbb R^{d\times N}$,
\begin{equation*}
|W(x,A_1) - W(x,A_2)| \leq C_W |A_1 - A_2| \big(1+|A_1|^{p-1}+|A_2|^{p-1}\big);
\end{equation*}
\item[(W2)] (modulus of continuity) there exists a continuous function $\omega_W\colon[0,+\infty)\to[0,+\infty)$, with $\omega_W(s)\to 0$ as $s\to0^+$, such that, for every $x_0,x_1\in\Omega$ and $A \in \mathbb R^{d \times N}$,
\begin{equation*}
|W(x_1,A)-W(x_0,A)|\leq\omega_W(|x_1-x_0|)(1 + |A|^p);
\end{equation*}
\item[(W3)] if $p=1$, there exist $C, L >0$ and $0 < \alpha < 1$ such that, for every $x \in \Omega$ and every $A \in  \mathbb R^{d\times N}$, with $|A| = 1$,
\begin{equation*}
\left|W^{\infty}(x,A)-\frac{W(x,tA)}{t}\right|\leq \frac{C}{t^\alpha}, \; 
\forall t > L
\end{equation*}
where $W^{\infty}$ denotes the recession function of $W$ with respect to the second variable given by
\begin{equation}\label{recfun}
W^{\infty}(x,A) := \limsup_{t \to + \infty}\frac{W(x,tA)}{t}, \; \forall x \in \Omega, \forall A \in \mathbb R^{d \times N}; 
\end{equation}
\end{enumerate}
\begin{enumerate}
\item[($\psi 1$)] (symmetry) $\forall x \in \Omega$, 
$\lambda \in \mathbb R^{d}$ and 
$\nu \in \mathbb S^{N-1}$, 
\begin{equation*}
\psi (x, \lambda, \nu)= \psi (x,-\lambda, -\nu);
\end{equation*}
\item[($\psi 2$)] (growth) $\exists \, c_\psi,C_\psi > 0$ such that,  $\forall x\in\Omega$, 
$\lambda \in \mathbb R^{d}$, $\nu \in \mathbb S^{N-1}$,
\begin{equation*}
c_\psi|\lambda| \leq \psi(x,\lambda, \nu) \leq C_\psi|\lambda |;
\end{equation*}
\item[($\psi 3$)] (positive $1$-homogeneity) $\forall x\in\Omega$, $\lambda \in \mathbb R^{d}$, $\nu \in \mathbb S^{N-1}$, $t >0$,
$$\psi(x,t\lambda, \nu) = t\psi(x, \lambda, \nu);$$
\item[($\psi 4$)] (sub-additivity) $\forall x\in\Omega$, $\lambda_1,\lambda_2 \in \mathbb R^{d}$, $\nu \in \mathbb S^{N-1}$,
\begin{equation*}
\psi(x, \lambda_1 + \lambda_2, \nu) \leq 
\psi(x,\lambda_1, \nu) +\psi(x,\lambda_2, \nu);
\end{equation*}
\item[($\psi 5$)] (modulus of continuity) there exists a continuous function $\omega_\psi\colon[0,+\infty)\to[0,+\infty)$, with $\omega_\psi(s)\to 0$ as $s\to0^+$, such that, for every $x_0,x_1\in\Omega$, $\lambda \in \mathbb R^{d}$ and 
$\nu \in \mathbb S^{N-1}$,
\begin{equation*}
|\psi(x_1,\lambda,\nu)-\psi(x_0,\lambda,\nu)|\leq\omega_\psi(|x_1-x_0|)|\lambda|.
\end{equation*}
\end{enumerate}

Given $(g, G)\in SBV(\Omega;\mathbb R^{d})\times L^1(\Omega;\mathbb R^{d\times N})$, 
consider the relaxed functional defined by
\begin{align}\label{Ip}
I_p(g,G):= \inf\Big\{\liminf_{n\to\infty} E(u_n) &: u_n \in SBV(\Omega;\mathbb R^{d}), u_n\debaixodaseta {L^1}{}  g, \,\,  \nabla u_n \weakst G, \nonumber\\
&\hspace{1cm}(1 - \delta_1(p))\sup_n\|\nabla u_n\|_{L^p(\Omega;\mathbb R^{d})} < + \infty \Big\},
\end{align}
where $\delta_1(p) = 1$ if $p=1$ and $\delta_1(p) = 0$ otherwise.
The requirement that the density $W$ be coercive in the second variable can be avoided due to the uniform bound placed on the gradients of the approximating sequences.

We point out that in the case where $W$ and $\psi$ do not depend explicitly on $x$,
the hypotheses above (except for $(W2)$ and $(\psi 5)$), and the functional 
$I_p(g,G)$, are those considered by Choksi \& Fonseca in \cite{CF1997} and for which they proved their integral representation result.

Under the previous hypotheses, it was shown in \cite[Theorem 5.1]{MMOZ}
that $I_p(g,G)$ admits an integral representation,
that is, there exist functions 
$H_p\colon\Omega\times\R{d\times N}\times\R{d\times N}\to[0,+\infty)$ and $h_p\colon\Omega\times \R{d}\times\S{N-1}\to[0,+\infty)$ such that
\begin{align*}
I_p(g,G) =\int_\Omega H_p(x,\nabla g(x),G(x))\, dx 
+ \int_{\Omega\cap S_g} h_p(x, [g](x),\nu_g(x))\, d\cH^{N-1}(x),
\end{align*}
and the limit densities $H_p$ and $h_p$ were identified as follows.

For $A,B\in\mathbb R^{d\times N}$ let
\begin{align*}
\cC_p^{\bulk}(A,B):= \bigg\{u\in SBV(Q;\mathbb R^{d}): u|_{\partial Q}(x)=Ax, \int_Q \nabla u\, dx=B, |\nabla u|\in L^p(Q) \bigg\}, 
\end{align*}
for $\lambda\in\mathbb R^{d}$ and $\nu\in\mathbb S^{N-1}$ let
 $u_{\lambda,\nu}$ be the function defined by
\begin{equation*}
u_{\lambda,\nu}(x):=
\begin{cases}
\lambda & \text{if $x\cdot\nu\geq0$,} \\
0 & \text{if $x\cdot\nu<0$,}
\end{cases}
\end{equation*}
and, for $p > 1$,
\begin{align*}
\cC_p^\surface(\lambda,\nu):= \Big\{u\in SBV(Q_\nu;\mathbb R^{d}): 
u|_{\partial Q_\nu}(x)=u_{\lambda,\nu}(x),
\nabla u(x)=0\;\text{for $\cL^N$-a.e.~$x\in Q_\nu$}\Big\},
\end{align*}
whereas
\begin{align*}
\cC_1^\surface(\lambda,\nu):= \Big\{u\in SBV(Q_\nu;\mathbb R^{d}): 
u|_{\partial Q_\nu}(x)=u_{\lambda,\nu}(x),
\int_{Q_\nu}\nabla u(x) \, dx=0\Big\}.
\end{align*}

Then, the relaxed densities $H_p$ and $h_p$ are given by 
\begin{align}\label{Hp}
H_p(x_0,A,B):= \inf\bigg\{  
\int_Q W(x_0,\nabla u(x))\, dx
+\int_{Q\cap S_u} \psi(x_0,[u](x),\nu_u(x))\,d\cH^{N-1}(x) 
: u\in\cC_p^\bulk(A,B)\bigg\},
\end{align}
for all $x_0\in\Omega$ and $A,B\in\mathbb R^{d\times N}$,
and, for all $x_0\in\Omega$, $\lambda\in\mathbb R^{d}$ and $\nu\in\mathbb S^{N-1}$,
\begin{equation}\label{hp}
h_p(x_0,\lambda,\nu):= \inf\bigg\{ 
\delta_1(p) \!\! \int_{Q_\nu} \!\!\!\! W^\infty(x_0,\nabla u(x))\, dx+ \! 
\int_{Q_\nu\cap S_u} \!\!\!\!\!\!\!\!\! \psi(x_0,[u](x),\nu_u(x))\, d\cH^{N-1}(x): 
u\in\cC_p^\surface(\lambda,\nu)\bigg\}.
\end{equation}

The relaxed bulk density $H_p$ exhibits an interaction between the initial densities $W$ and $\psi$, this tells us that the jumps in the approximating sequences diffuse throughout portions of the body and in the limit contribute to both bulk and surface terms.
If $p > 1$ and gradients of admissible sequences are bounded in $L^p$, the relaxed interfacial density $h_p$ is independent of $W$, this means that it is cheaper to approximate jumps with jumps rather than with sharp gradients. On the other hand, if $p = 1$, there is a contribution of $W$, through its recession function $W^{\infty}$, in $h_1$.

\section{Multi-level Structured Deformations}\label{mlsd}

A further step in the development of the theory of structured deformations was undertaken by Deseri \& Owen \cite{DO2019}, where the notion was extended to so-called hierarchical systems of structured deformations, in order to include the effects of disarrangements that occur at more than one sub-macroscopic level.
Indeed, as it happens, many natural and man-made materials, for example, muscles, cartilage, bones, plants and some biomedical materials, do, in fact, exhibit  different levels of disarrangements.

In this broader setting, 
a (first-order) structured deformation $(g,G)$ corresponds to a two-level hierarchical system, the macroscopic level, expressed through the field $g$, plus one microscopic level, related to the field $G$.
For $L \in \mathbb N$, $L>1$, an 
($L+1$)-level hierarchical system of structured deformations consists of an $(L+1)$-tuple $(g, G_1, \ldots, G_{L})$, where each $G_i, i = 1, \ldots L$,  provides the effects at the macro-level of the corresponding sub-macroscopic level $i$.

For $L\in\mathbb N$, $p\geq 1$ and $\Omega\subset\mathbb R^{N}$ a bounded, connected, open set, we define
$$HSD_L^p(\Omega):= SBV(\Omega;\mathbb R^{d})\times \underbrace{L^p(\Omega;\mathbb R^{d\times N})\times\cdots\times L^p(\Omega;\mathbb R^{d\times N})}_{L\text{-times}},$$
the set of $(L+1)$-level (first-order) structured deformations on $\Omega$. 

In the remainder of this article the presented results are stated for $L = 2$, for
simplicity and brevity of exposition, for the general case of any $L \in \mathbb N$
we refer to \cite{BMMOZ}, \cite{BMZ2024} and \cite{BMMOZ2}.

We begin by defining the notion of convergence for a three-level structured deformation.

\begin{definition}\label{mlconv}
We say that the (double-indexed) sequence 
$u_{n_1,n_2}\in SBV(\Omega;\mathbb R^{d})$ converges in the sense of $HSD_2^p(\Omega)$ to $(g,G_1,G_2)$ if
\begin{itemize}
\item[(i)] $\displaystyle\lim_{n_1\to+\infty}\lim_{n_2\to+\infty} u_{n_1,n_2} = g$,
where the iterated limit holds in the sense of $L^1(\Omega;\mathbb R^{d})$;
\item[(ii)] 
$\displaystyle \lim_{n_{2}\to+\infty}u_{n_1,n_2} =: g_{n_1} \in SBV(\Omega;\mathbb R^{d})$, where this limit is in the sense of $L^1(\Omega;\mathbb R^{d})$ convergence, and 
$\displaystyle \lim_{n_1\to+\infty}\nabla g_{n_1}=G_{1},$ 
where the limit holds in the sense of weak convergence in  $L^p(\Omega;\mathbb R^{d\times N})$, if $p > 1$, and 
in the sense of measures, if $p=1$;
\item[(iii)] $\displaystyle \lim_{n_1\to+\infty}\lim_{n_2\to+\infty} \nabla u_{n_1,n_2} = G_2$,  
where the iterated limit holds in the sense of weak convergence in  $L^p(\Omega;\mathbb R^{d\times N})$, if $p > 1$, and in the sense of measures, if $p=1$.
\end{itemize}
We use the notation 
$$u_{n_1,n_2}\pweak(g,G_1,G_2)$$ 
to indicate this convergence.
\end{definition}

The following version of the approximation theorem, stated here for three-level structured deformations, was obtained in \cite{BMMOZ}.

\begin{theorem} \label{gapth}
For any three-level structured deformation $(g,G_1,G_2) \in HSD_2^p(\Omega)$ 
there exists a (double-indexed) sequence 
$(n_1,n_2)\mapsto u_{n_1,n_2}\in SBV(\Omega;\mathbb R^{d})$
converging to $(g,G_1,G_2)$ in the sense of Definition \ref{mlconv}.
\end{theorem}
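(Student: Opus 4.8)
The plan is to build the desired double-indexed sequence by an iterated application of the single-level Choksi \& Fonseca approximation theorem (Theorem \ref{approxthmCF}), exploiting the fact that the three conditions in Definition \ref{mlconv} are arranged hierarchically: the innermost limit (in $n_2$) is a level-one structured-deformation approximation producing the intermediate fields $g_{n_1}$, and the outer limit (in $n_1$) is itself a level-one approximation relating $g$ to $G_1$ and the $g_{n_1}$ to $G_2$. Concretely, I would first use Theorem \ref{approxthmCF} at the outer level to produce a sequence $g_{n_1}\in SBV(\Omega;\mathbb R^d)$ with $g_{n_1}\to g$ in $L^1$ and $\nabla g_{n_1}\wsto G_1$ in the appropriate topology (weak-$L^p$ if $p>1$, measures if $p=1$). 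In fact, following the constructive proof given above for Theorem \ref{approxthmCF}, one can even arrange $\nabla g_{n_1}=G_1$ for every $n_1$ when $p=1$; for $p>1$ a small modification (truncating or mollifying the Alberti primitive so the gradients lie in $L^p$ and converge weakly) is needed, and this is where one must be a little careful.

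Next, for each fixed $n_1$, I would apply Theorem \ref{approxthmCF} (or rather its constructive refinement) a second time to the single-level structured deformation $(g_{n_1}, G_2)$, obtaining a sequence $u_{n_1,n_2}\in SBV(\Omega;\mathbb R^d)$ with $u_{n_1,n_2}\to g_{n_1}$ in $L^1$ as $n_2\to\infty$ and $\nabla u_{n_1,n_2}\wsto G_2$ (again: $\nabla u_{n_1,n_2}=G_2$ identically is achievable when $p=1$ via the Alberti-plus-piecewise-constant construction in the proof of Theorem \ref{approxthmCF}). With this choice, condition (ii) of Definition \ref{mlconv} holds because $\lim_{n_2}u_{n_1,n_2}=g_{n_1}$ and $\lim_{n_1}\nabla g_{n_1}=G_1$; condition (i) holds because $\lim_{n_1}\lim_{n_2}u_{n_1,n_2}=\lim_{n_1}g_{n_1}=g$ in $L^1$; and condition (iii) holds because $\lim_{n_2}\nabla u_{n_1,n_2}=G_2$ (or converges weakly to it) for each $n_1$, so the iterated limit is $G_2$. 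Since only \emph{iterated} limits are required, no diagonal extraction is needed and the hierarchical structure of the definition matches the two-stage construction exactly.

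The main technical obstacle, and essentially the only one, is the case $p>1$: Alberti's theorem (Theorem \ref{Alb}) only gives control on the $L^1$ norm of the constructed gradient, not on its $L^p$ norm, so the naive construction need not land inside $SBV$ with the weak-$L^p$ convergence built into Definition \ref{mlconv}. To handle this one replaces the piecewise-constant approximation step by a more refined one: given the target field (say $G_1-\nabla g\in L^p$), one approximates it in $L^p$ by fields for which an explicit $SBV$ primitive with $L^p$ gradient can be written down, or one mollifies and uses a Lusin-type argument, so that the resulting gradients are bounded in $L^p$ and converge weakly. This is exactly the argument already used in \cite{BMMOZ} to establish Theorem \ref{gapth}, and in the present write-up it suffices to invoke it. For $p=1$ the construction is completely elementary, as in the proof of Theorem \ref{approxthmCF} above, and no compactness or weak-convergence subtleties arise. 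I would close by remarking that the general $L$-level statement follows by the same scheme applied $L$ times, which is why the literature cited (\cite{BMMOZ}) treats it in full generality.
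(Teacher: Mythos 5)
Your two-stage construction, once each stage is instantiated with the constructive proof of Theorem \ref{approxthmCF}, unwinds to exactly the proof in the paper: taking $h_1,h_2$ to be Alberti primitives of $G_1-\nabla g$ and $G_2-G_1$ and $\overline h_{\ell,n_\ell}$ piecewise-constant $L^1$-approximations of $h_\ell$, your $u_{n_1,n_2}$ becomes $g+(h_1-\overline h_{1,n_1})+(h_2-\overline h_{2,n_2})$, which is the paper's formula up to sign conventions, and the verification of (i)--(iii) of Definition \ref{mlconv} is the same. So the approach is essentially identical.

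The one point to correct is the ``main technical obstacle'' you single out for $p>1$: it is not there. Alberti's theorem is applied to the \emph{differences} $G_1-\nabla g$ and $G_2-G_1$, and since the piecewise-constant corrections $\overline h_{\ell,n_\ell}$ contribute nothing to the absolutely continuous part of the derivative, the construction yields $\nabla g_{n_1}=G_1$ and $\nabla u_{n_1,n_2}=G_2$ \emph{identically}, not merely fields controlled in $L^1$. Because $G_1,G_2\in L^p(\Omega;\mathbb R^{d\times N})$ by the very definition of $HSD_2^p(\Omega)$, the weak $L^p$ convergences required in Definition \ref{mlconv} are those of constant sequences and hold trivially; the fact that $\nabla g$ itself need only be in $L^1$ is harmless, since $\nabla g$ never appears as the gradient of an approximant. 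No truncation, mollification, or Lusin-type refinement is needed, and there is no need to defer this step to \cite{BMMOZ}: the elementary construction closes the argument for all $p\geq 1$ at once, which is precisely why the paper's proof is three lines long.
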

\begin{proof}
Let $u_1, u_2 \in SBV(\Omega;\mathbb R^{d})$ be 
functions, provided by Theorem~\ref{Alb}, 
satisfying $\nabla u_1= \nabla g - G_1$ and $\nabla u_2 = G_{1}-G_2$ 
and, for $\ell = 1,2$, let $n_\ell\mapsto \overline u_{n_\ell}$ be piecewise constant sequences approximating $u_\ell$ in $L^1(\Omega;\mathbb R^{d})$.
Then the (double-indexed) sequence given by
$$\displaystyle u_{n_1,n_2} := g+ (\overline u_{n_1}-u_1) + (\overline u_{n_2}-u_2)$$
verifies the desired properties.
\end{proof}

We point out that in the previous proof, the order in which the double limits are taken is crucial to attain the required result. 

In analogy with \eqref{Ip}, Theorem \ref{gapth} may be used to assign an energy to a three-level structured deformation $(g,G_1,G_2)$, see Section \ref{iterel} for more details.

\section{The Global Method for Relaxation}\label{gm}

This is the main section of our paper. In it we present the integral representation 
result for a general class of abstract functionals, defined in the space of
multi-level structured deformations, obtained in \cite{BMZ2024}. In fact, in
Corollary \ref{glm2} below, we present an improvement of \cite[Theorem 3.2]{BMZ2024}
in the sense that we now allow for different integrability behaviours for the fields corresponding to different sub-macroscopic levels.

The results mentioned herein are based on the ideas of the global method for relaxation, introduced by Bouchitté, Fonseca \& Mascarenhas in \cite{BFM1998} to obtain an integral representation for an abstract functional $\mathcal F(u;O)$,
defined for $u \in BV(\Omega;\mathbb R^d)$ and $O$ an open subset of $\Omega$, such that the set function $\mathcal F(u;\cdot)$ is the restriction to the open subsets of $\Omega$, of a Radon measure and $\mathcal F(u;O)$ satisfies certain
lower semicontinuity, locality and growth conditions. See \cite{BFM1998} for more details or $(H1)-(H4)$ below for the precise hypotheses in our setting.

Given the Dirichlet-type functional
$$m(u;O) := \inf\left\{\mathcal F(v;O) : v \in BV(\Omega;\mathbb R^d), 
u = v \mbox { near } \partial O\right\},$$
defined for $u \in BV(\Omega;\mathbb R^d)$ and $O \in \mathcal O(\Omega)$, the method is based on the fact that $\mathcal F(u;O)$ and $m(u;O)$ behave in a similar fashion when $O$ is a cube of small side-length, so the relaxed energy densities that represent $\mathcal F(u;O)$ are characterised in terms of $m$ through a blow-up argument.

Since its inception, this global method for relaxation has known numerous applications and generalisations, in particular it was used in the context of (second-order) structured deformations in the space $BH$ by
Fonseca, Hagerty \& Paroni in \cite{FHP}.

The results of Theorem \ref{BMZgm} and Corollary \ref{glm2} are quite general 
and cover, with the same proof, a hierarchical system of structured deformations with an arbitrary number of sub-macroscopic levels, with no need for iterative procedures (cf. Section \ref{iterel} for more comments on this topic).

We begin by introducing the space of test functions. For $(g, G_1, G_2)\in HSD_2^p(\Omega)$ and $O \in \mathcal O_{\infty}(\Omega)$, let
\begin{align*}
\mathcal C_{HSD^p_2}(g, G_1,G_2; O):=\Big\{(u, U_1,U_2)\in HSD^p_2 (\Omega) &: u=g 
 \hbox{ in a neighbourhood of } \partial O, \\
&\int_O (G_i-U_i) \,dx =0, i=1,2  \Big\},
\end{align*}
and consider the Dirichlet-type functional, $m : HSD^p_2(\Omega)\times \mathcal O_\infty(\Omega) \to [0,+\infty]$, defined by
\begin{align}\label{Dirf}
m(g, G_1,G_2;O):=\inf\Big\{\mathcal F(u, U_1,U_2; O):
(u, U_1, U_2)\in 
\mathcal C_{HSD^p_2}(g, G_1, G_2; O)\Big\},
\end{align}
where, for $p \geq1$, $\mathcal F: HSD_2^p(\Omega) \times\mathcal 
O(\Omega)\to [0, +\infty]$ is a functional satisfying 
\begin{enumerate}
\item[$(H1)$] (measure)
for every $(g, G_1, G_2) \in HSD^p_2(\Omega)$, 
$\mathcal F(g, G_1,G_2;\cdot)$ is the restriction to 
$\mathcal O(\Omega)$ of a Radon measure; 
\item[$(H2)$] (lower semicontinuity)
for every $O \in \mathcal O(\Omega)$, 
\begin{itemize}
\item if  $p > 1$, 
$\mathcal F(\cdot, \cdot, \cdot; O)$ is $HSD^p_2$-lower semicontinuous,  
that is, 
if $(g, G_1,G_2)\in HSD_2^p(\Omega)$ and  $\left((g^n, G_1^n,G^n_2)\right) \subset HSD_2^p(\Omega)$ are such that $g^n \to g$ in $L^1(\Omega;\mathbb R^{d} )$, $G^n_i\rightharpoonup G_i$ in $L^p(\Omega;\mathbb R^{d \times N})$, for $i=1,2$, then
$$
\mathcal F(g, G_1,G_2;O)\leq \liminf_{n\to +\infty}\mathcal F(g^n,G_1^n, G^n_2;O);$$
\item the same holds in the case $p=1$, replacing the weak convergences
$G^n_i\rightharpoonup G^i$ in $L^p(\Omega;\mathbb R^{d \times N})$, for $i=1,2$, with weak star convergences in the sense of measures 
$\cM(\Omega;\mathbb R^{d\times N})$;
\end{itemize}
\end{enumerate}
\begin{enumerate}
\item[$(H3)$] (locality)
for all $O \in \mathcal O(\Omega)$, $\mathcal F(\cdot, \cdot, \cdot;O)$ is local, that is, if $g= u$, $G_1= U_1$, $G_2=U_2$ a.e. in $O$, then 
$$\mathcal F(g, G_1,G_2;O)= \mathcal F(u, U_1, U_2;O);$$
\item[$(H4)$] (growth) there exists $C>0$ such that
\begin{align*}
\frac{1}{C}\left(\sum_{i=1}^2 \||G_i|^p\|_{L^1(O;\mathbb R^{d \times N})}+ |D g|(O)\right)&\leq \mathcal F(g, G_1, G_2;O)\\
&\leq
C\left(\mathcal L^N(O) +\sum_{i=1}^2 \||G_i|^p\|_{L^1(O;\mathbb R^{d \times N})}+  |D g|(O)\right),
\end{align*}
for every $(g, G_1, G_2) \in HSD^p_2(\Omega)$ and every $O \in \mathcal O(\Omega)$.
\end{enumerate}

Based on the set of hypotheses listed above, the following result was proved in
\cite{BMZ2024}. We state it here for $L=2$, the general case may be found in \cite[Theorem 3.2]{BMZ2024}.

\begin{theorem}\label{BMZgm}
Let $p \geq 1$ and let $\mathcal F: HSD_2^p(\Omega) \times\mathcal 
O(\Omega)\to [0, +\infty]$ be a functional satisfying (H$1$)-(H$4$). 
Then
\begin{align*}
\mathcal F(u, U_1,U_2;O) &= \int_O \!f(x,u(x), \nabla u(x), U_1(x),U_2(x)) \, dx 
+\int_{O \cap S_u}\!\!\!\!\!\Phi(x, u^+(x), u^-(x),\nu_u(x)) 
\, d\mathcal H^{N-1}(x),
\end{align*}
where, for every $x_0\in \Omega$, $ a, \theta,\lambda \in \mathbb R^d$, 
$\xi, B_1,B_2 \in \mathbb R^{d \times N}$, $\nu \in \mathbb S^{N-1}$,
the relaxed energy densities are given by
\begin{align*}
f(x_0, a, \xi, B_1,B_2) &:=	
\limsup_{\varepsilon \to 0^+}\frac{m(a+ \xi(\cdot-x_0), B_1,B_2; Q(x_0,\varepsilon))}{\varepsilon^N},\\
\Phi(x_0, \lambda, \theta, \nu) &:= \limsup_{\varepsilon \to 0^+}\frac{m(v_{\lambda, \theta,\nu}(\cdot-x_0), 0, 0; Q_{\nu}(x_0,\varepsilon))}{\varepsilon ^{N-1}}.
\end{align*}
In the above expression, $m$ is the functional given in \eqref{Dirf}, $0$ is the zero matrix in $\mathbb R^{d \times N}$
and $v_{\lambda,\theta, \nu}$ is the function defined by
$v_{\lambda,\theta, \nu}(x) := \begin{cases} \lambda, &\hbox{if } x\cdot \nu \geq 0\\
\theta, &\hbox{ if } x\cdot \nu < 0.\end{cases}$
\end{theorem}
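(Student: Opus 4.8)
The plan is to follow the blueprint of the global method for relaxation of Bouchitt\'e, Fonseca \& Mascarenhas \cite{BFM1998}, adapted to the product space $HSD_2^p(\Omega)$ exactly as in \cite{BMZ2024}. The overall strategy is: (i) show that, for fixed $(u,U_1,U_2)$, the set function $O\mapsto m(u,U_1,U_2;O)$ is comparable to $\mathcal F(u,U_1,U_2;O)$ on small cubes, so that a Radon--Nikod\'ym/blow-up analysis can be performed with $m$ in place of $\mathcal F$; (ii) differentiate the measure $\mathcal F(u,U_1,U_2;\cdot)$ with respect to $\mathcal L^N+|D^su|$ at $\mathcal L^N$-a.e.\ point (the bulk points) and at $\mathcal H^{N-1}\res S_u$-a.e.\ point (the jump points); (iii) identify the two Radon--Nikod\'ym derivatives with $f$ and $\Phi$ respectively. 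Because all four hypotheses $(H1)$--$(H4)$ are stated directly for the abstract $\mathcal F$, most of the argument is a transcription of the $BV$ proof; the structured-deformation features enter only through the fact that the fields $U_1,U_2$ carry no extra surface structure (they live in $L^p$) and are matched in the test class only in an \emph{averaged} sense, $\int_O(G_i-U_i)\,dx=0$.

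First I would establish the key comparison lemma: for every $(u,U_1,U_2)\in HSD_2^p(\Omega)$ there is a constant $C$ such that
\begin{equation*}
\mathcal F(u,U_1,U_2;O)-C\,\mathcal L^N(O)\leq m(u,U_1,U_2;O)\leq \mathcal F(u,U_1,U_2;O)
\end{equation*}
for all $O\in\mathcal O_\infty(\Omega)$; the upper bound is immediate since $(u,U_1,U_2)$ is itself admissible, while the lower bound uses $(H2)$ (lower semicontinuity) together with $(H4)$ (growth) to control any competitor, essentially as in \cite[Lemma 3.1]{BFM1998}. Next I would prove that $m(u,U_1,U_2;\cdot)$ is (the trace of) a Radon measure, or at least subadditive and superadditive enough to run the blow-up; here one must be a little careful because the constraint $\int_O(G_i-U_i)\,dx=0$ is nonlocal in $O$, so gluing competitors on disjoint subcubes requires correcting the averages of $U_1,U_2$ by small perturbations supported near the interfaces, using an Alberti-type construction (Theorem \ref{Alb}) to realise a prescribed $L^p$-small matrix field as a gradient of an $SBV$ function with small $L^1$ and small jump. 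This correction step is, I expect, the main obstacle: one has to show that the cost of adjusting the averages is $o(\varepsilon^N)$ (respectively $o(\varepsilon^{N-1})$ on surface cubes), so that it does not pollute the blow-up limit; the bounds in $(H4)$ and the linear dependence of the Alberti estimates on $\|f\|_{L^1}$ are exactly what make this work.

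Then I would carry out the two blow-up arguments. At a bulk point $x_0$ (a Lebesgue point of $\nabla u$, $U_1$, $U_2$ and a point where $|D^su|$ has zero density and $\mathcal F(u,U_1,U_2;\cdot)$ is differentiable w.r.t.\ $\mathcal L^N$), rescaling $Q(x_0,\varepsilon)$ to $Q$ and using the comparison lemma plus locality $(H3)$ and continuity of translations, one shows that $\varepsilon^{-N}\mathcal F(u,U_1,U_2;Q(x_0,\varepsilon))$ and $\varepsilon^{-N}m(u(x_0)+\nabla u(x_0)(\cdot-x_0),U_1(x_0),U_2(x_0);Q(x_0,\varepsilon))$ have the same $\limsup$, which by definition is $f(x_0,u(x_0),\nabla u(x_0),U_1(x_0),U_2(x_0))$; here one also needs that the affine datum can replace $u$ up to an error controlled by the Lebesgue-point property and $(H4)$. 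At a jump point $x_0\in S_u$, rescaling the $\nu_u(x_0)$-oriented cube $Q_{\nu_u(x_0)}(x_0,\varepsilon)$ and using that $u$ blows up to the two-valued function $v_{u^+(x_0),u^-(x_0),\nu_u(x_0)}$ while the $L^p$-fields $U_1,U_2$ blow up to $0$ (since $\int_{Q_\nu(x_0,\varepsilon)}|U_i|^p=o(\varepsilon^{N-1})$ by $(H4)$ and $|D^su|(Q_\nu(x_0,\varepsilon))\sim\varepsilon^{N-1}$), one obtains $\Phi(x_0,u^+(x_0),u^-(x_0),\nu_u(x_0))$ as the Radon--Nikod\'ym derivative with respect to $\mathcal H^{N-1}\res S_u$. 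Finally, the fact that the measure $\mathcal F(u,U_1,U_2;\cdot)$ has no Cantor part follows from $(H4)$, which bounds it by $\mathcal L^N+\sum_i\||U_i|^p\|_{L^1}+|Du|$ and from below by $|Du|$, forcing its singular part to be concentrated on $S_u$; assembling the two density computations then yields the claimed integral representation. The only genuinely new point relative to \cite{BMZ2024} is notational (carrying $U_2$ with a possibly different integrability exponent is deferred to Corollary \ref{glm2}), so for the present statement the proof is that of \cite[Theorem 3.2]{BMZ2024} specialised to $L=2$.
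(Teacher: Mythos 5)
Your overall architecture (compare $\mathcal F$ with $m$ on small cubes, then run two blow-ups, one with respect to $\mathcal L^N$ at Lebesgue points and one with respect to $\mathcal H^{N-1}\res S_u$ at jump points, with the average constraint $\int_O(G_i-U_i)\,dx=0$ repaired via Theorem \ref{Alb}) is the right one and matches the strategy of \cite{BMZ2024}. However, your ``key comparison lemma'' is false as stated, and it is precisely the step that carries the real content of the proof. The upper bound $m(u,U_1,U_2;O)\leq\mathcal F(u,U_1,U_2;O)$ is indeed immediate, but the lower bound $\mathcal F(u,U_1,U_2;O)-C\,\mathcal L^N(O)\leq m(u,U_1,U_2;O)$ cannot hold for all $O\in\mathcal O_\infty(\Omega)$: take $\mathcal F(g,G_1,G_2;O)=\int_O(1+|\nabla g|^p+|G_1|^p+|G_2|^p)\,dx+|D^sg|(O)$, which satisfies $(H1)$--$(H4)$, let $O$ be the unit cube, $U_1=U_2=0$, and let $u$ vanish near $\partial O$ while oscillating inside so that $|Du|(O)=M$ is large; then $v\equiv 0$ is an admissible competitor, so $m(u,0,0;O)\leq C$, whereas $\mathcal F(u,0,0;O)\geq M/C$. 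No uniform constant can close this gap; $\mathcal F$ and $m$ are comparable only \emph{asymptotically}, at $\mu$-a.e.\ point along shrinking cubes, where $\mu=\mathcal L^N\lfloor\Omega+|D^su|$ --- this is exactly the statement \eqref{fandm}.

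The missing ingredient is therefore the proof of \eqref{fandm}, which in the paper rests on the nested-cube inequality \eqref{ineq}, i.e.\ $\limsup_{\delta\to0^+}m(u,U_1,U_2;Q_\nu(x_0,(1-\delta)r))\leq m(u,U_1,U_2;Q_\nu(x_0,r))$ (for $p>1$; a version for more general sets when $p=1$). One then takes near-minimisers $v_k$ for $m$ on a sequence of shrinking cubes, uses $(H4)$ to extract compactness, the boundary condition and the averaged constraints to identify the limit, and $(H2)$ together with \eqref{ineq} and a De Giorgi--Letta/Besicovitch covering argument to recover $\mathcal F$ from below by $m$ in the limit of vanishing cube size. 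It is in the proof of \eqref{ineq} (gluing a competitor on the inner cube to $(u,U_1,U_2)$ on the frame $Q_\nu(x_0,r)\setminus Q_\nu(x_0,(1-\delta)r)$, and restoring the averages of $U_1,U_2$ at cost controlled by $(H4)$ and the Alberti construction) that the correction argument you describe actually lives. Your blow-up steps and the identification of the densities $f$ and $\Phi$, as well as the observation that $(H4)$ forces the singular part of $\mathcal F(u,U_1,U_2;\cdot)$ to be absolutely continuous with respect to $\mathcal L^N+\mathcal H^{N-1}\res S_u$, are consistent with the paper once \eqref{fandm} is available; but as written, your argument substitutes a false uniform inequality for the genuine asymptotic one and thus has a gap at its foundation.
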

\begin{proof}
We present here a sketch of the proof, for more details we refer to \cite{BMZ2024}.

We begin by proving that, if $p>1$, 
\begin{align}\label{ineq}
\limsup_{\delta \to 0^+} m(u, U_1,U_2;Q_\nu(x_0,(1-\delta) r))
\leq m(u, U_1,U_2;Q_\nu(x_0,r)),
\end{align}
where $Q_\nu(x_0, r)$ is any cube centred at $x_0$ with side-length $r$, two faces orthogonal to $\nu$ and contained in $\Omega$. In the case $p=1$, a similar inequality is true for more general sets, other than cubes, with a simpler proof.
 
As in \cite{BFM1998}, the previous inequality leads to the conclusion that, for every $\nu \in \mathbb S^{N-1}$ and for every $(u,U_1,U_2) \in HSD_2^p(\Omega)$,
we have 
\begin{equation}\label{fandm}
\lim_{\varepsilon \to 0^+}
\frac{{\mathcal F}(u, U_1,U_2; Q_\nu(x_0, \varepsilon))}
{\mu( Q_\nu(x_0,\varepsilon))}
= \lim_{\varepsilon \to 0^+} 
\frac{m (u, U_1,U_2; Q_\nu(x_0, \varepsilon))}
{\mu (Q_\nu(x_0,\varepsilon))}
\end{equation}
for $\mu$-a.e. $x_0 \in \Omega$, where $\mu:=\mathcal L^N\lfloor \Omega + |D^s u|,$
thus showing that the measures $\mathcal F(u, U_1,U_2; \cdot)$ and
$m(u, U_1,U_2;\cdot)$ behave in a similar way when applied to cubes with small side-length.

In particular, from equality \eqref{fandm}, we may show that the density of the measure $\mathcal F(u, U_1,U_2; \cdot)$, with respect to each of the measures $\mathcal L^N$ and $|D^su|$, may be computed from that of $m(u, U_1,U_2;\cdot)$ via a blow-up argument and rescaling techniques.
\end{proof}

\begin{remark}\label{inv}
We point out that if $\mathcal F$ is translation invariant in the first variable, i.e., if
$$\mathcal F(u+a, U_1,U_2;O)= \mathcal F(u, U_1,U_2;O),$$ 
for every $((u,U_1, U_2),O) \in HSD^p_2(\Omega)\times \mathcal O(\Omega)$ and for every $a\in \mathbb R^d$,
then the function $f$ does not depend on $a$, and $\Phi$ does not depend on $\lambda$ and $\theta$, but only on the difference $\lambda- \theta$:
\begin{align*}
f(x_0, a, \xi, B_1,B_2) &= f(x_0, 0, \xi, B_1,B_2), \\
\Phi(x_0, \lambda, \theta, \nu) &= \Phi(x_0, \lambda - \theta,0, \nu),
\end{align*}
for all $x_0\in \Omega$, $ a, \theta,\lambda \in \mathbb R^d$, $\xi, B_1,\dots, B_L \in \mathbb R^{d \times N}$ and $\nu \in \mathbb S^{N-1}$. 
In this case we write, with an abuse of notation,
\begin{align*}
f(x_0,\xi, B_1,B_2) &= f(x_0, 0, \xi, B_1,B_2),\\
\Phi(x_0, \lambda - \theta, \nu) &= \Phi(x_0, \lambda - \theta,0, \nu).
\end{align*}
This remark will prove useful for the applications in Section \ref{app}, when confronting the expressions for the relaxed energy densities obtained in Theorem \ref{BMZgm} with those available in the literature.
\end{remark}

We now proceed to show that a similar conclusion to that of Theorem\ref{BMZgm} holds in the space
\begin{equation*}
SD_{L,p}(\Omega):= SBV(\Omega;\mathbb R^{d})\times 
\underbrace{L^1(\Omega;\mathbb R^{d\times N})\times\cdots\times 
L^1(\Omega;\mathbb R^{d\times N})}_{(L-1)\text{-times}} \times 
L^p(\Omega;\mathbb R^{d\times N}).
\end{equation*}
This allows us to consider fields $(g,G_1, \ldots,G_L)$ where $G_L$ may exhibit a different integrability behaviour from the remaining fields $(G_1, \ldots, G_{L-1})$.
As before, for simplicity, we focus here on the case $L=2$ and we refer to  \cite{BMMOZ2} where the result is stated in full generality.

The notion of convergence considered in the space $SD_{2,p}(\Omega)$ is analogous to the one in Definition \ref{mlconv}, except that all the mentioned convergences of
sequences of gradients are now taken in the sense of measures. However, if $p>1$ and 
$$\sup_{n_1}\sup_{n_2}\int_\Omega |\nabla u_{n_1,n_2}|^p \, dx < +\infty,$$
then these limits hold indeed in the sense of $L^p$-weak convergence (cf. \cite[Definition 2.4 and Remark 2.5]{BMMOZ2}). We write 
$u_{n_1,n_2}\weakstH (g,G_1,G_2)$ to indicate this convergence.

\begin{remark}\label{napprox}
As can be easily seen from its proof, Theorem \ref{gapth} still holds in $SD_{2,p}(\Omega)$ with respect to the above notion of convergence.
\end{remark}

\medskip

We begin by stating the set of hypotheses that we consider in this setting.

Let $\mathcal F\colon SD_{2,p}(\Omega) \times\mathcal 
O(\Omega)\to [0, +\infty]$ be a functional satisfying 
\begin{enumerate}
\item[$(H1)$*] (measure) for every $(g, G_1,G_2) \in SD_{2,p}(\Omega)$, 
$\mathcal F(g, G_1,G_2;\cdot)$ is the restriction to 
$\mathcal O(\Omega)$ of a Radon measure; 
\item[$(H2)$*] (lower semi-continuity) for every $O \in \mathcal O(\Omega)$,  
$\mathcal F(\cdot, \cdot, \cdot; O)$ is lower semi-continuous, i.e.,
if $(g, G_1,G_2) \in SD_{2,p}(\Omega)$ and $((g^n, G_1^n, G^n_2)) \subset SD_{2,p}(\Omega)$ are such that $g^n \to g$ in $L^1(\Omega;\mathbb R^{d})$ and  $G_i^n \wsto G_i$ in $\cM(\Omega;\R{d\times N})$, for $i=1,2$, then
$$\mathcal F(g, G_1,G_2;O)\leq 
\liminf_{n}\mathcal F(g^n,G_1^n,G^n_2;O);$$
\item[$(H3)$*] (locality) for all $O \in \mathcal O(\Omega)$, 
$\mathcal F(\cdot, \cdot, \cdot;O)$ is local, that is, if $g= u$, $G_1= U_1$, $G_2=U_2$ a.e. in $O$, then 
$\mathcal F(g, G_1,G_2;O) = \mathcal F(u, U_1,U_2;O)$;
\item[$(H4)$*] (growth) there exists a constant $C>0$ such that
\begin{align*}
&\frac{1}{C}\bigg(\|G_1\|_{L^1(O;\mathbb R^{d \times N})} + 
\|G_2\|^p_{L^p(O;\mathbb R^{d \times N})} + |D g|(O)\bigg)
\leq \cF(g, G_1,G_2;O)\\
&\hspace{2cm}
\leq
C\left(\cL^N(O) + \|G_1\|_{L^1(O;\mathbb R^{d \times N})} 
+ \|G_2\|^p_{L^p(O;\mathbb R^{d \times N})} +  |D g|(O)\right),
\end{align*}
for every $(g, G_1, G_2) \in SD_{2,p}(\Omega)$ and every $O \in \cO(\Omega)$.
\end{enumerate}

Given $(g, G_1, G_2)\in SD_{2,p}(\Omega)$ and 
$O \in \mathcal O(\Omega)$, we introduce the space of test functions 
\begin{align*}
\cC_{SD_{2,p}}(g, G_1, G_2; O)\coloneqq 
\left\{(u, U_1, U_2)\in SD_{2,p}(\Omega): u=g 
\hbox{ in a neighbourhood of } \partial O, \right. \nonumber\\
\left.\int_O (G_i-U_i) \, dx=0, i=1,2  \right\}, 	
\end{align*}
and we let $m_{2,p} \colon SD_{2,p}(\Omega)\times \cO(\Omega)\to[0,+\infty)$ be the functional defined by
\begin{align}\label{m2p}
m_{2,p}(g, G_1,G_2;O):=\inf\Big\{\mathcal F(u, U_1, U_2; O): 
(u, U_1, U_2)\in \mathcal C_{SD_{2,p}}(g, G_1,G_2; O)\Big\}.
\end{align}

Then, the following result holds.

\begin{corollary}\label{glm2}
Let $p \geq 1$ and let 
$\cF\colon SD_{2,p}(\Omega) \times\mathcal O(\Omega)\to [0, +\infty]$ 
be a functional satisfying $(H1)$* - $(H4)$*. Then
\begin{align*}
\mathcal F(u, U_1,U_2;O)&= 
\int_O \!f_{2,p}(x,u(x), \nabla u(x), U_1(x), U_2(x))\, dx \\
&+\int_{O \cap S_u}\!\!\!\!\!\Phi_{2,p}(x, u^+(x), u^-(x),\nu_u(x)) 
\, d\mathcal H^{N-1}(x),
\end{align*}
where
\begin{align*}
f_{2,p}(x_0,a, \xi, B_1, B_2):=	\limsup_{\varepsilon \to 0^+}
\frac{m_{2,p}(a + \xi(\cdot -x_0), B_1,B_2; Q(x_0,\varepsilon))}{\varepsilon^N},
\end{align*}
\begin{align*}
\Phi_{2,p}(x_0, \lambda, \theta, \nu):= \limsup_{\varepsilon \to 0^+}
\frac{m_{2,p}(v_{\lambda, \theta,\nu}(\cdot-x_0), 0,0; Q_{\nu}(x_0,\varepsilon))}{\varepsilon ^{N-1}},
\end{align*}
for a.e. $x_0\in \Omega$, for every $a, \lambda, \theta \in \mathbb R^d$, 
$\xi, B_1, B_2 \in \mathbb R^{d \times N}$, $\nu \in \mathbb S^{N-1}$,
where $0$ is the zero matrix in $\mathbb R^{d \times N}$, 
and $v_{\lambda,\theta, \nu}$ is defined by 
$v_{\lambda,\theta,\nu}(x)\coloneqq 
\begin{cases}
\lambda & \text{if } x \cdot \nu \geq 0, \\
\theta & \text{if } x \cdot \nu < 0.
\end{cases}$
\end{corollary}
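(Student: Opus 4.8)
The plan is to deduce Corollary \ref{glm2} from Theorem \ref{BMZgm} by a reduction argument rather than repeating the full blow-up proof. The essential observation is that the two settings differ only in the integrability imposed on the intermediate fields $G_1$ (now $L^1$ instead of $L^p$) and in the topology with respect to which lower semi-continuity is assumed. I would therefore begin by examining how the proof of Theorem \ref{BMZgm} actually uses hypotheses $(H1)$--$(H4)$, and verify that each step goes through verbatim once $(H1)$--$(H4)$ are replaced by $(H1)$*--$(H4)$*. Concretely: the growth condition $(H4)$* still gives the two-sided bound $\frac1C(\|G_1\|_{L^1(O)}+\|G_2\|^p_{L^p(O)}+|Dg|(O)) \le \mathcal F(g,G_1,G_2;O) \le C(\cdots)$, so that $\mathcal F(g,G_1,G_2;\cdot)$ is again absolutely continuous with respect to $\mu := \mathcal L^N\lfloor\Omega + |D^s g|$ plus the measures $|G_1|\,\mathcal L^N$ and $|G_2|^p\,\mathcal L^N$; since in the blow-up at $\mathcal L^N$-a.e.\ point those latter densities are finite, the decomposition of $\mathcal F(u,U_1,U_2;\cdot)$ into an absolutely continuous (bulk) part and a part concentrated on $S_u$ is unchanged.

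Next I would reprove the key localisation inequality \eqref{ineq}, i.e.\ $\limsup_{\delta\to0^+} m_{2,p}(u,U_1,U_2;Q_\nu(x_0,(1-\delta)r)) \le m_{2,p}(u,U_1,U_2;Q_\nu(x_0,r))$, for the functional $m_{2,p}$. The construction in \cite{BFM1998} and \cite{BMZ2024} takes a near-optimal competitor on the larger cube, cuts it off near the boundary of the smaller cube, glues in the boundary datum, and corrects the average constraints $\int_O(G_i-U_i)\,dx=0$ by a small perturbation supported in a thin shell. The correction of the $G_L=G_2$ field uses an $L^p$ estimate exactly as before; the correction of the $G_1$ field now only needs an $L^1$ estimate, which is \emph{weaker} and hence still available — in fact this is precisely the point where the present setting is easier. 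The gluing does not affect $g$ or its jump set away from the shell, and the shell has vanishing measure, so the error terms are controlled through $(H4)$* as in the original argument. Once \eqref{ineq} holds, the equality of blow-up limits
\begin{align*}
\lim_{\varepsilon\to0^+}\frac{\mathcal F(u,U_1,U_2;Q_\nu(x_0,\varepsilon))}{\mu(Q_\nu(x_0,\varepsilon))}
= \lim_{\varepsilon\to0^+}\frac{m_{2,p}(u,U_1,U_2;Q_\nu(x_0,\varepsilon))}{\mu(Q_\nu(x_0,\varepsilon))}
\end{align*}
for $\mu$-a.e.\ $x_0$ follows from the abstract measure-theoretic lemma of \cite{BFM1998}, which only uses $(H1)$*, $(H2)$*, $(H3)$* and \eqref{ineq}, together with the fact (Remark \ref{napprox}) that the approximation theorem still supplies admissible competitors in $SD_{2,p}(\Omega)$.

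Finally I would run the blow-up at $\mathcal L^N$-a.e.\ point $x_0$ of approximate differentiability of $u$ and approximate continuity of $(U_1,U_2)$: rescaling $Q(x_0,\varepsilon)$ to $Q$ and using locality $(H3)$* together with the continuity properties of $m_{2,p}$ in its first three arguments (again a consequence of $(H4)$* exactly as in \cite{BMZ2024}), one identifies the Radon–Nikodym derivative of $\mathcal F(u,U_1,U_2;\cdot)$ with respect to $\mathcal L^N$ as $f_{2,p}(x_0,u(x_0),\nabla u(x_0),U_1(x_0),U_2(x_0))$. At $\mathcal H^{N-1}$-a.e.\ point of $S_u$ the blow-up of $u$ is the two-valued jump function $v_{u^+(x_0),u^-(x_0),\nu_u(x_0)}$, the rescaled $(U_1,U_2)$ converge to $0$ in $L^1$ and in $L^p$ respectively (their averages on shrinking cubes tend to the zero vector by the growth bound and the concentration of $|D^s u|$), and one obtains the surface density $\Phi_{2,p}(x_0,u^+(x_0),u^-(x_0),\nu_u(x_0))$. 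The main obstacle is verifying the localisation inequality \eqref{ineq} with the matching-of-averages correction for the \emph{two} fields $U_1,U_2$ carrying different integrability exponents simultaneously; but since the $L^1$ correction for $U_1$ is dominated by the $L^p$ correction one would otherwise need, and the thin-shell construction is insensitive to the exponent, this step reduces to the estimate already carried out in \cite{BMZ2024}, and I would simply indicate the necessary modifications, referring to \cite{BMMOZ2} for the details in full generality.
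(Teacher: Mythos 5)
Your proposal is correct and follows essentially the same route as the paper: both reduce the statement to the proof of Theorem \ref{BMZgm} (i.e.\ \cite[Theorem 3.2]{BMZ2024}), with the crucial observation that the localisation inequality \eqref{ineq} and the subsequent blow-up estimates are carried out separately for each field $U_1$, $U_2$, so that the mixed $L^1$/$L^p$ integrability in $(H4)$* causes no difficulty. The paper's proof is merely a more condensed version of the same check, pointing to the modified nested-set estimate and to the field-by-field structure of \cite[Lemma 3.4]{BMZ2024}.
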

\begin{proof} The proof is identical to the one in \cite[Theorem 3.2]{BMZ2024}, upon noticing that the fields $G_i, i = 1,2$, behave independently of each other.

Indeed, the new growth condition $(H4)$* impacts \cite[Remark 3.1]{BMZ2024} which now reads
\begin{align*}
\mathcal F(u, U_1,U_2;O_2) \leq & \, \mathcal F(u, U_1,U_2;O_1) \\
&+C\left(\mathcal L^N(O_2 \setminus O_1) +
\|U_1\|_{L^1(O_2\setminus O_1;\mathbb R^{d \times N})}
+  \|U_2\|^p_{L^p(O_2\setminus O_1;\mathbb R^{d \times N})}
 +  |D u|(O_2 \setminus O_1)\right),
\end{align*}
for any $(u, U_1,U_2) \in SD_{2,p}(\Omega)$ and any open sets 
$O_1 \subset \subset O_2 \subseteq \Omega$. 

This, in turn, is used in the proof of 
\cite[Lemma 3.4]{BMZ2024}, in order to show
inequality \eqref{ineq}. As the estimate in this lemma is obtained
separately for each field corresponding to each of the different sub-macroscopic levels, and the proof contemplates both the cases $p>1$ and $p=1$, the conclusion still holds in the present context. From here, \cite[Theorem 3.6]{BMZ2024} follows as before.

Finally, to obtain the expressions for the relaxed energy densities in terms of the functional $m_{2,p}$, it suffices to follow the reasoning given in the proof of
\cite[Theorem 3.2]{BMZ2024} where, as above, the estimates therein are obtained separately for each of the fields $U_1$, $U_2$ (cf. \cite[equations (3.19) and (3.32)]{BMZ2024}).
\end{proof}

The same comment presented in Remark \ref{inv} applies also in this situation.
In addition, we note that,
for $L=2$, Corollary \ref{glm2} provides a variant to Theorem \ref{BMZgm} only when $p>1$, as in the case $p=1$ the function spaces $HSD^p_1(\Omega)$ and $SD_{2,1}(\Omega)$ coincide.

\section{Applications}\label{app}
\subsection{The Choksi \& Fonseca Case Revisited}\label{CFapp}

As a first application of Theorem \ref{BMZgm} for $L=1$, we re-examine the 
Choksi $\&$ Fonseca relaxation, which is an example set in the space of two-level structured deformations. We will refer to its generalisation to the inhomogeneous case, given in \cite{MMOZ}, and considered in Section \ref{CFrel}.

Through our global method strategy, and given the energy in \eqref{Ex}, we obtain an integral representation result for
\begin{align}\label{newIp}
I_p(g,G):= \inf\Big\{\liminf_{n\to\infty} E(u_n) &: u_n \in SBV(\Omega;\mathbb R^{d}), u_n\debaixodaseta {L^1}{}  g, \,\,  \nabla u_n \weakst G \Big\},
\end{align}
under a similar set of hypotheses on $W$ and $\psi$ but requiring only measurability, rather than uniform continuity, of $W$ in the $x$ variable. From the modelling point of view, this setting is more realistic since it allows for the consideration of materials that may exhibit very different behaviours from point to point, such as multi-grain type materials or other types of mixtures.

Precisely, we assume that
$W\colon\Omega \times \mathbb R^{d \times N}\to[0,+\infty)$ 
is a Carath\'eodory function such that: 
\begin{enumerate}
\item[$(W1)$*] ($p$-Lipschitz continuity) there exists $C_W >0$ such that, for a.e. $x\in\Omega$ and for all $A_1,A_2 \in \mathbb R^{d\times N}$,
\begin{equation*}
|W(x,A_1) - W(x,A_2)| \leq C_W |A_1 - A_2| \big(1+|A_1|^{p-1}+|A_2|^{p-1}\big);
\end{equation*}
\item[$(W4)$] (bound) there exists $A_0 \in \mathbb R^{d \times N}$ such that 
$W(\cdot, A_0)\in L^\infty(\Omega)$;
\item[$(W5)$] (coercivity) there exists $c_W>0$ such that, for a.e. $x \in \Omega$ and every $A \in  \mathbb R^{d\times N}$,
\begin{equation*}
c_W |A|^{p}-\frac{1}{c_W}\leq W(x,A);
\end{equation*}
\end{enumerate}
and that 
$\psi\colon \Omega \times \mathbb R^d \times \mathbb S^{N-1} \to [0,+\infty)$ 
is a Carath\'eodory function satisfying hypotheses $(\psi 1)-(\psi 5)$ from  
Section \ref{CFrel}. We remark that $(W4)$ and $(W5)$ yield a growth condition from above on $W$ of the form $W(x,A) \leq C(1+|A|^p)$, for some $C>0$ and for a.e.
$x \in \Omega$ and every $A \in  \mathbb R^{d\times N}$.

Under these conditions, the following integral representation for $I_p(g,G)$ was shown in \cite[Theorem 4.1]{BMZ2024}.

\begin{theorem}\label{measCF}
Let $p\geq 1$ and let $\Omega \subset \mathbb R^N$ be a bounded, open set. 
Consider $E$ given by \eqref{Ex} where  
$W\colon\Omega\times\R{d\times N}\to[0,+\infty)$ and $\psi\colon\Omega\times\R{d}\times\S{N-1}\to[0,+\infty)$ satisfy (W$\,1$)*, (W$\,4$), (W$\,5$), ($\psi 1), (\psi 2)$ and $\psi$ is continuous. Let $(g,G) \in SD(\Omega)$, with $G \in L^p(\Omega;\mathbb R^{d \times N})$, 
and assume that $I_p(g,G)$ is defined by \eqref{Ip}. 
Then,  there exist 
$f: \Omega \times \mathbb R^{d \times N} \times \mathbb R^{d \times N} \to [0,+\infty)$, $\Phi: \Omega \times \mathbb R^d \times S^{N-1}\to [0, +\infty)$ 
such that  
\vspace{-0,25cm}
\begin{align*}
I_p(g,G) &=  \int_\Omega f(x,\nabla g(x),G(x)) \,  dx 
+ \int_{\Omega\cap S_g}\Phi(x,[g](x),\nu_g(x)) \, d\mathcal H^{N-1}(x),
\end{align*}
where the relaxed energy densities are given by
\begin{align*}
f(x_0, \xi, B) &:=	\limsup_{\varepsilon \to 0^+}
\frac{m(\xi(\cdot-x_0),B; Q(x_0,\varepsilon))}{\varepsilon^N},\\
\Phi(x_0,\lambda,\theta,\nu) & := \limsup_{\varepsilon \to 0^+}
\frac{m(u_{\lambda-\theta,\nu}(\cdot-x_0), 0; Q_{\nu}(x_0,\varepsilon))}
{\varepsilon ^{N-1}},
\end{align*}
for all $x_0\in \Omega$, $ \theta,\lambda \in \mathbb R^d$, 
$\xi, B \in \mathbb R^{d \times N}$ and $\nu \in \mathbb S^{N-1}$. In the above expressions, $0$ denotes the zero $\mathbb R^{d \times N}$ matrix,
$u_{\lambda-\theta, \nu}(y) := \begin{cases} 
\lambda - \theta, &\hbox{if } y\cdot \nu \geq 0\\
0, &\hbox{ if } y\cdot \nu < 0
\end{cases}$ and $m\colon SD(\Omega) \times \mathcal O_\infty(\Omega)\to [0,+\infty)$ is 
the Dirichlet-type functional in \eqref{Dirf} but defined with $L=1$ and 
$\mathcal F= I_p$.

If, in addition, $p>1$ and $\psi$ also satisfies $(\psi 3) - (\psi 5)$,  then $$\Phi(x_0,\lambda,\theta,\nu) = h_p(x_0,\lambda-\theta,\nu)$$
for every $x_0\in \Omega$, $\theta,\lambda \in \mathbb R^d$ and 
$\nu \in \mathbb S^{N-1}$, where $h_p$ is given in \eqref{hp}.
\end{theorem}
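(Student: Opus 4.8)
The statement comprises two assertions: the integral representation for $I_p$, and — when $p>1$ and $\psi$ satisfies $(\psi 1)$–$(\psi 5)$ — the identification $\Phi=h_p$. The plan for the first is to localise the relaxed functional \eqref{Ip} to open sets, writing $I_p(g,G;O)$ for $O\in\mathcal{O}(\Omega)$, and to verify that $\cF:=I_p$ satisfies the abstract hypotheses $(H1)$–$(H4)$ of Theorem \ref{BMZgm} with $L=1$. Theorem \ref{BMZgm} then yields both the representation and the blow-up formulas for $f$ and $\Phi$; and since $E(u+a)=E(u)$ for every constant $a$, Remark \ref{inv} removes the $a$-dependence of $f$ and reduces $\Phi$ to a function of $\lambda-\theta$, producing the stated expressions with $u_{\lambda-\theta,\nu}$ in place of $v_{\lambda,\theta,\nu}$.

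Verifying $(H1)$–$(H4)$ is the routine but technical core of the first part. Locality $(H3)$ is immediate from the local form of \eqref{Ex}; lower semicontinuity $(H2)$ is the general fact that a relaxed functional is sequentially lower semicontinuous for the very topology over which one relaxes (diagonalisation, using metrisability of weak, resp.\ weak-$\ast$, convergence of gradients on bounded sets). For the growth bound $(H4)$: from below, use coercivity $(W5)$ together with weak lower semicontinuity of the $L^p$-norm of $\nabla u_n$ to recover $\|G\|^p_{L^p(O)}$, use $(\psi 2)$ to recover $c_\psi|D^s u_n|(O)$, and use $Du_n\weakst Dg$ to bring in $|Dg|(O)$; from above, use $W(x,A)\le C(1+|A|^p)$ (a consequence of $(W1)^\ast$ and $(W4)$, as noted in the text), $(\psi 2)$, and the Choksi–Fonseca competitor $u_n=g+h-h_n$ of Theorem \ref{approxthmCF}, for which $\nabla u_n\equiv G$. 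The measure property $(H1)$ follows from the De Giorgi–Letta criterion: superadditivity on disjoint open sets is trivial; nested subadditivity $I_p(g,G;A\cup B)\le I_p(g,G;A')+I_p(g,G;B)$ for $A'\ccc A$ is obtained by gluing near-optimal competitors on $A'$ and on $B$ via a cut-off in $A\setminus A'$ and absorbing the interaction by the upper growth estimate; inner regularity and local finiteness come from $(H4)$. Applying Theorem \ref{BMZgm} concludes this part.

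For the identification of $\Phi$ with $h_p$ (now $p>1$, $\psi$ satisfying $(\psi 1)$–$(\psi 5)$), fix $x_0\in\Omega$, set $\mu:=\lambda-\theta$, and compare $\varepsilon^{-(N-1)}m(u_{\mu,\nu}(\cdot-x_0),0;Q_\nu(x_0,\varepsilon))$ with $h_p(x_0,\mu,\nu)$ ($m$ being \eqref{Dirf} with $L=1$, $\cF=I_p$). \emph{Upper bound.} Take an almost optimal $u\in\cC_p^\surface(\mu,\nu)$ (modified by a standard collar construction so that it equals $u_{\mu,\nu}$ on a neighbourhood of $\partial Q_\nu$), and rescale to $u_\varepsilon(y):=u((y-x_0)/\varepsilon)$ on $Q_\nu(x_0,\varepsilon)$; this preserves $\nabla u_\varepsilon=0$ and the datum $u_{\mu,\nu}(\cdot-x_0)$ near the boundary, so $(u_\varepsilon,0)$ is admissible for $m$ and the constant sequence $u_\varepsilon$ an admissible approximation, whence $m(\cdot)\le E(u_\varepsilon;Q_\nu(x_0,\varepsilon))$. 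The bulk term is $\le\|W(\cdot,0)\|_{L^\infty}\varepsilon^N=o(\varepsilon^{N-1})$ (indeed $W(\cdot,0)\in L^\infty$ by $(W1)^\ast$ and $(W4)$), and the surface term, after the change of variables, converges to $\varepsilon^{N-1}\int_{Q_\nu\cap S_u}\psi(x_0,[u],\nu_u)\,d\mathcal{H}^{N-1}$ by continuity of $\psi$ and dominated convergence ($(\psi 2)$ provides the dominating function). Dividing by $\varepsilon^{N-1}$ and letting $\varepsilon\to0$ and then the optimality parameter to $0$ gives $\Phi\le h_p$.

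\emph{Lower bound, and the main obstacle.} Along any $\varepsilon_k\to0$, extract near-optimal competitors for $m(u_{\mu,\nu}(\cdot-x_0),0;Q_\nu(x_0,\varepsilon_k))$ and diagonalise — absorbing a boundary-layer error through the measure/growth properties $(H1)$–$(H4)$ so that the resulting $v^k\in SBV(Q_\nu(x_0,\varepsilon_k))$ equals $u_{\mu,\nu}(\cdot-x_0)$ on an honest neighbourhood of $\partial Q_\nu(x_0,\varepsilon_k)$ — with $\varepsilon_k^{-(N-1)}E(v^k;Q_\nu(x_0,\varepsilon_k))\to\liminf_k\varepsilon_k^{-(N-1)}m(\cdot)\le h_p(x_0,\mu,\nu)<\infty$. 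Hence $E(v^k)=O(\varepsilon_k^{N-1})$, so by $(W5)$ the rescalings $w^k(z):=v^k(x_0+\varepsilon_k z)$ obey $\|\nabla w^k\|^p_{L^p(Q_\nu)}=O(\varepsilon_k^{p-1})\to0$ (this is where $p>1$ is essential), $|D^s w^k|(Q_\nu)\le C$, and $w^k=u_{\mu,\nu}$ near $\partial Q_\nu$. Discarding the nonnegative bulk term, changing variables in the surface integral, and freezing the spatial variable at $x_0$ via $(\psi 5)$ with an $o(1)$ error (legitimate since $|D^s w^k|(Q_\nu)\le C$), we are reduced to
\[
\liminf_{k\to\infty}\int_{Q_\nu\cap S_{w^k}}\psi(x_0,[w^k],\nu_{w^k})\,d\mathcal{H}^{N-1}\ \ge\ h_p(x_0,\mu,\nu).
\]
This is the crux. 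For the frozen density $\psi(x_0,\cdot,\cdot)$, the assumptions $(\psi 1)$, $(\psi 3)$, $(\psi 4)$ (with the lower bound in $(\psi 2)$) are precisely the Choksi–Fonseca surface hypotheses, under which $h_p(x_0,\cdot,\cdot)\le\psi(x_0,\cdot,\cdot)$ (test with the flat profile $u_{\mu,\nu}$), $h_p(x_0,\cdot,\cdot)$ is BV-elliptic, and the associated surface functional is lower semicontinuous along sequences with boundary datum $u_{\mu,\nu}$ and gradients vanishing in $L^p$; applying this to $(w^k)$ yields the displayed inequality, hence $\Phi\ge h_p$. The genuine difficulty is that $(w^k)$ is controlled only in $BV$ with $|D^s w^k|(Q_\nu)$ bounded — not $\mathcal{H}^{N-1}(S_{w^k})$ — so Ambrosio's $SBV$-compactness theorem does not apply directly; one must combine the $L^p$-smallness of the gradients with the fine structure of $h_p(x_0,\cdot,\cdot)$, as in \cite{CF1997} and \cite{BMZ2024}. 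A secondary, still delicate, point is the cut-off-and-absorb step ensuring the correct boundary datum along the diagonal sequence.
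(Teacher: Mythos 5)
Your proposal follows essentially the same route as the paper: localise $I_p$ to open sets, verify hypotheses $(H1)$--$(H4)$ so that Theorem \ref{BMZgm} (with $L=1$) together with Remark \ref{inv} yields the representation and the blow-up formulas, and then establish $\Phi=h_p$ by a double inequality, which is exactly how the text describes the argument (deferring details to \cite{BMZ2024}). Your sketches of the individual verifications and of the two bounds for $\Phi$, including the correctly identified crux in the lower bound (only $|D^s w^k|$ is controlled, so Ambrosio compactness is not directly applicable and one must use the structure of $h_p(x_0,\cdot,\cdot)$ as in \cite{CF1997}), are consistent with the cited proof.
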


We consider the localised version of $I_p(g,G)$, defined, for every 
$O \in \mathcal O(\Omega)$, by
\begin{align}\label{Iploc}
I_p(g,G;O):= \inf\Big\{\liminf_{n\to\infty} E(u_n) &: u_n \in SBV(O;\mathbb R^{d}), u_n\debaixodaseta {L^1}{}  g, \,\,  \nabla u_n \weakst G \Big\},
\end{align}
where the above convergences take place in $O$ and the integrations in \eqref{Ex} are taken over $O$, instead of $\Omega$.
Then, the proof of the first part of Theorem \ref{measCF} is a consequence of Theorem \ref{BMZgm}, using also Remark \ref{inv}, and amounts to showing that $I_p(g,G;O)$ satisfies hypotheses $(H1)-(H4)$ stated therein. This yields an integral representation with relaxed densities expressed in terms of the functional $m$. Under the mentioned additional hypotheses, we prove further
that $\Phi(x_0,\lambda,\theta,\nu) = h_p(x_0,\lambda-\theta,\nu)$, by means of a double inequality.

As for the bulk energy density, we have (cf. \cite[Theorem 4.2]{BMZ2024}):

\begin{theorem}\label{Hphp}
Let $p \geq 1$. If, in addition to the conditions considered in the previous theorem, $W$ also satisfies (W$\,2$) and (W$\,3$) (if $p=1$),
then $$f(x_0,\xi, B) = H_p(x_0, \xi, B),$$
for every $x_0\in \Omega$, $\xi, B \in \mathbb R^{d \times N}$, where $H_p$ is given by \eqref{Hp}, and
$$\Phi(x_0,\lambda,\theta,\nu) = h_p(x_0,\lambda-\theta,\nu),$$
for every $x_0\in \Omega$, $\theta,\lambda \in \mathbb R^d$ and 
$\nu \in \mathbb S^{N-1}$, where $h_p$ is given in \eqref{hp}.
\end{theorem}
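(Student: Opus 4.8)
The plan is to establish the bulk identity $f=H_p$ directly, by a double inequality at the level of the Dirichlet-type functional $m$, in the same spirit as the identification $\Phi=h_p$ carried out in Theorem~\ref{measCF} (whose hypotheses, in particular $(\psi1)$--$(\psi5)$, we assume here in addition to (W2) and, for $p=1$, (W3)); the surface identity is then already contained in Theorem~\ref{measCF} for $p>1$, while the case $p=1$ follows from the same scheme with the surface normalisation $\varepsilon^{N-1}$, the recession function $W^\infty$ now surviving in the limit. We remark that, under the present hypotheses, (W2) forces $W$ to coincide a.e.\ with a jointly continuous function satisfying (W1)--(W3), so that, together with $(\psi1)$--$(\psi5)$, the hypotheses of \cite[Theorem~5.1]{MMOZ} are met; since (W5) renders the bound $\sup_n\|\nabla u_n\|_{L^p}<+\infty$ in \eqref{Ip} immaterial (any energy-bounded competitor has, along a subsequence, equibounded $L^p$ gradients), this already provides, through the representation of \cite{MMOZ} and a localisation, an independent confirmation of the statement; we nonetheless present the self-contained argument.

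\emph{Step 1: $f(x_0,\xi,B)\leq H_p(x_0,\xi,B)$.} Fix $\eta>0$ and pick $u\in\cC_p^{\bulk}(\xi,B)$ with $\int_Q W(x_0,\nabla u)\,dx+\int_{Q\cap S_u}\psi(x_0,[u],\nu_u)\,d\mathcal H^{N-1}\leq H_p(x_0,\xi,B)+\eta$; by standard density/truncation arguments (using the $p$-growth of $W$ from (W1)*, (W4), (W5), and $(\psi2)$) we may assume $u(x)=\xi x$ in a neighbourhood of $\partial Q$. On $Q(x_0,\varepsilon)$ set $u^\varepsilon(y):=\varepsilon\,u\big(\tfrac{y-x_0}{\varepsilon}\big)$, extended by $\xi(\cdot-x_0)$ outside $Q(x_0,\varepsilon)$, and put $U^\varepsilon:=\nabla u^\varepsilon$. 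Then $u^\varepsilon=\xi(\cdot-x_0)$ near $\partial Q(x_0,\varepsilon)$ and $\int_{Q(x_0,\varepsilon)}(B-U^\varepsilon)\,dy=\varepsilon^N B-\varepsilon^N\!\int_Q\nabla u=0$, so $(u^\varepsilon,U^\varepsilon)$ is an admissible test pair; taking the constant relaxing sequence $v_n\equiv u^\varepsilon$ in the definition of $I_p(u^\varepsilon,U^\varepsilon;Q(x_0,\varepsilon))$ gives $m(\xi(\cdot-x_0),B;Q(x_0,\varepsilon))\leq E(u^\varepsilon;Q(x_0,\varepsilon))$. A change of variables $z=\tfrac{y-x_0}{\varepsilon}$, together with the $1$-homogeneity $(\psi3)$ of the interfacial term and the moduli of continuity (W2), $(\psi5)$ on the cube $Q(x_0,\varepsilon)$, yields $E(u^\varepsilon;Q(x_0,\varepsilon))\leq\varepsilon^N\big(H_p(x_0,\xi,B)+\eta+\rho(\varepsilon)\big)$ with $\rho(\varepsilon)\to0$; dividing by $\varepsilon^N$, letting $\varepsilon\to0^+$ and then $\eta\to0$, we conclude.

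\emph{Step 2: $f(x_0,\xi,B)\geq H_p(x_0,\xi,B)$.} Fix $\varepsilon>0$ and a near-optimal triple for $m(\xi(\cdot-x_0),B;Q(x_0,\varepsilon))$: a test pair $(u,U)$ with $u=\xi(\cdot-x_0)$ near $\partial Q(x_0,\varepsilon)$, $\int_{Q(x_0,\varepsilon)}(B-U)=0$, and a sequence $v_n\to u$ in $L^1$ with $\nabla v_n\wsto U$. By the reduction lemmas of \cite{BMZ2024} --- which allow $m$ to be computed over relaxing sequences that already attain the boundary datum near $\partial Q(x_0,\varepsilon)$, the energy of a shrinking boundary layer being controlled via the growth (H4) --- we may assume $v_n=\xi(\cdot-x_0)$ near $\partial Q(x_0,\varepsilon)$. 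Rescaling $\tilde v_n(z):=\tfrac1\varepsilon v_n(x_0+\varepsilon z)$ gives $\tilde v_n\in\cC_p^{\bulk}(\xi,B_n)$ with $B_n:=\int_Q\nabla\tilde v_n\,dz\to B$ as $n\to\infty$. Undoing the scaling, using $(\psi3)$ for the interfacial term and (W2), $(\psi5)$ to replace $W(x_0+\varepsilon z,\cdot)$, $\psi(x_0+\varepsilon z,\cdot)$ by $W(x_0,\cdot)$, $\psi(x_0,\cdot)$ --- the resulting error being at most $\big(\omega_W(\sqrt N\varepsilon)+\omega_\psi(\sqrt N\varepsilon)\big)C$ with $C$ independent of $n$ and $\varepsilon$, thanks to the a priori bounds $\|\nabla\tilde v_n\|_{L^p(Q)}\leq C$ (from (W5)) and $\int_{Q\cap S_{\tilde v_n}}|[\tilde v_n]|\,d\mathcal H^{N-1}\leq C$ (from $(\psi2)$) valid along any energy-bounded sequence --- one obtains $\varepsilon^{-N}E(v_n;Q(x_0,\varepsilon))\geq H_p(x_0,\xi,B_n)-\sigma(\varepsilon)$, $\sigma(\varepsilon)\to0$. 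Letting $n\to\infty$ and using continuity of $B\mapsto H_p(x_0,\xi,B)$ (or a thin-layer modification of $\tilde v_n$ bringing $B_n$ back to $B$), then taking the infimum over near-optimal triples, $\varepsilon^{-N}m(\xi(\cdot-x_0),B;Q(x_0,\varepsilon))\geq H_p(x_0,\xi,B)-\sigma(\varepsilon)$; letting $\varepsilon\to0^+$ gives $f(x_0,\xi,B)\geq H_p(x_0,\xi,B)$ for every $x_0\in\Omega$. For $p=1$, the same scheme with normalisation $\varepsilon^{N-1}$ and boundary datum $u_{\lambda-\theta,\nu}$ yields $\Phi(x_0,\lambda,\theta,\nu)=h_1(x_0,\lambda-\theta,\nu)$: admissible gradients then have vanishing mean but need not be small, the bulk part of $E$ persists in the limit through $W^\infty$, and (W3) is precisely what controls $\big|W^\infty(x_0,A)-t^{-1}W(x_0,tA)\big|$ and legitimises this passage, as in \cite{CF1997,MMOZ}.

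The main obstacle is Step~2, and within it two points. The first is the reduction letting $m(g,G;O)$ be computed over relaxing sequences that realise the boundary value $g$ near $\partial O$ --- i.e.\ the collapse of the two nested infima, over test pairs and over relaxing sequences, into one; this is a cut-off/gluing argument, inherited from \cite{BMZ2024}, where the energy deposited in a vanishing boundary layer is estimated via (H4) and the absolute continuity of the measures $\mathcal F(\cdot;\cdot)$. The second is making the errors produced by the $x$-moduli of continuity uniform in $n$; this rests entirely on the coercivity (W5) and the lower growth $(\psi2)$, without which $\|\nabla\tilde v_n\|_{L^p(Q)}$ and the $L^1$-norm of $[\tilde v_n]$ on $S_{\tilde v_n}$ need not be equibounded along minimising sequences, and the passage from $x_0+\varepsilon z$ to $x_0$ would fail. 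For the $p=1$ surface identity there is the further, classical difficulty of controlling the interaction between the diffusing jumps and the recession function, for which (W3) is the decisive hypothesis.
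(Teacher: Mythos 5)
Your proposal is correct and follows essentially the same route as the paper, which establishes the identifications $f=H_p$ and $\Phi=h_p$ (the latter for $p=1$) by the double inequality obtained from rescaling near-optimal competitors for $m$ and for the cell formulas, freezing the $x$-dependence via (W2) and $(\psi5)$, and invoking (W3) for the recession-function passage when $p=1$, exactly as in the cited \cite[Theorem 4.2]{BMZ2024}. The only caveat is your parenthetical ``independent confirmation'' via \cite{MMOZ}: matching two integral representations only yields the equalities $\mathcal L^N$-a.e.\ and $\mathcal H^{N-1}$-a.e., not for \emph{every} $x_0$ as claimed, so the self-contained double-inequality argument you give is indeed the one that is needed.
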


From Theorems \ref{measCF} and \ref{Hphp} we conclude that, assuming the same set of hypotheses as in \cite{CF1997} and \cite{MMOZ}, our relaxation approach via the global method, recovers the expressions for the relaxed energy densities 
$H_p$ and $h_p$ obtained in the cited papers, however Theorem \ref{BMZgm} is applicable under weaker assumptions.

\subsection{Iterated Relaxation}\label{iterel}

In this section, we present a further application of the global method approach for relaxation, now in the form of Corollary \ref{glm2}.

We begin by noting that, as was done in Section \ref{CFrel}, due to the approximation result in Theorem \ref{gapth}, see also Remark \ref{napprox},  we can associate to each multi-level structured deformation an energy satisfying hypotheses $(H1)$* - $(H4)$* of Corollary \ref{glm2}, through a relaxation process.

Stating it, for simplicity, for the case $L=2$, given 
$(g,G_1,G_2)\in SD_{2,p}(\Omega)$, we consider
\begin{align*}
I_{2,p}(g,G_1, G_2)&:= 
\inf_{\{u_{n_1,n_2}\}\subset SBV}\Big\{  \liminf_{n_1,n_2} E(u_{n_1,n_2}): u_{n_1,n_2}\weakstH(g,G_1,G_2)\Big\}, 
\end{align*}
where 
$E(\cdot)$ is given by \eqref{Ex}.
The fact that $I_{2,p}(g,G_1, G_2)$ satisfies $(H1)$* - $(H4)$* is proved in
\cite[Lemma 4.3, Lemma 4.4, Proposition 4.5]{BMMOZ2}, see also 
\cite[Theorem 4.6]{BMMOZ2}. Hence
Corollary \ref{glm2} provides an integral representation for the functional
$I_{2,p}(g,G_1, G_2)$, with the relaxed energy densities being expressed in terms of 
$m_{2,p}$ in \eqref{m2p}, taking $\mathcal F = I_{2,p}$.

We may also consider an iterated relaxation procedure in the following way.
We begin by relaxing the original energy in \eqref{Ex} in the manner of Choksi \& Fonseca and Matias, Morandotti, Owen \& Zappale, leading to the functional
$$I_{1}(g,G) := \inf\Big\{\liminf_{n\to\infty} E(u_n): u_n \in SBV, 
u_n \debaixodaseta{L^1}{} g, \nabla u_n \pweak G\Big\}.$$
As recalled in Section \ref{CFrel}, $I_{1}(g,G)$ admits an integral representation 
with relaxed densities $H_p$ and $h_p$ that, as shown in \cite[Theorem 4.2]{BMMOZ2}, still satisfy good properties that allows for the process to be iterated. 
Therefore, we can relax again and consider the functional
\begin{align*}
\widetilde{I}_{2,p} (g, G_1, G_2) & :=
\inf \Big\{  \liminf_{n_1}  I_{1}(\gamma_{n_1},\Gamma_{n_1}): \gamma_{n_1} \in SBV, \Gamma_{n_1}\in L^{p},\\
& \hspace{4,5cm}\gamma_{n_1}\debaixodaseta{L^1}{} g, 
\nabla\gamma_{n_1} \weakst G_1, \Gamma_{n_1}\weakst G_2
\Big\}.
\end{align*}
The existence of admissible sequences over which to perform the above minimisation is guaranteed by Theo\-rem \ref{approxthmCF} and Remark \ref{napprox}, for $\gamma_{n_1}$, whereas $\Gamma_{n_1}$ can be taken to be the constant sequence $G_2$.

In comparing the properties satisfied by the original densities $W$ and $\psi$ in \eqref{Ex}, with those that hold for $H_p$ and $h_p$, the main difference pertains to the growth conditions verified by $H_p$, when $p>1$. Indeed, due to the dependence of $H_p$ on both $W$ and $\psi$, the original growth condition of order $p>1$ is now replaced by a linear growth condition. This justifies the need for setting the global method for relaxation in the space $SD_{2,p}(\Omega)$ and for the version considered in Corollary \ref{glm2}.    

The two approaches to the relaxation for a three-level structured deformation, presented above, in fact, yield the same result since, as proven in \cite[Proposition 4.1]{BMMOZ2},
$$I_{2,p}(g,G_1, G_2) = \widetilde{I}_{2,p} (g, G_1, G_2).$$
The advantage of the direct strategy, via the global method, is that it can be applied, with the same proof, for any number of levels. The iterative strategy, on the other hand, provides more explicit formulae but requires proofs at each stage which become more and more involved as the number of levels increases (see \cite{BMMOZ2} for more details).

\bigskip

\subsection*{Acknowledgements}

The research of Ana Cristina Barroso was partially supported by National Funding from FCT - Funda\c c\~ao para a Ci\^encia e a Tecnologia through project 
UIDB/04561/2020: 

\noindent https://doi.org/10.54499/UIDB/04561/2020.

The research of José Matias was funded by FCT/Portugal through project UIDB/04459/2020 with DOI identifier 10-54499/UIDP/04459/2020.

Elvira Zappale acknowledges the support of Piano Nazionale di Ripresa e Resilienza (PNRR) - Missione 4 ``Istruzione e Ricerca''- Componente C2 Investimento 1.1, "Fondo per il Programma Nazionale di Ricerca e
Progetti di Rilevante Interesse Nazionale (PRIN)" - Decreto Direttoriale n. 104 del 2 febbraio 2022 - CUP 853D23009360006. She is a member of the Gruppo Nazionale per l'Analisi Matematica, la Probabilit\`a e le loro Applicazioni (GNAMPA) of the Istituto Nazionale di Alta Matematica ``F.~Severi'' (INdAM). 
She also acknowledges partial funding from the GNAMPA Project 2023 \emph{Prospettive nelle scienze dei materiali: modelli variazionali, analisi asintotica e omogeneizzazione}.
The work of EZ is also supported by Sapienza - University of Rome through the projects Progetti di ricerca medi, (2021), coordinator  S. Carillo e Progetti di ricerca piccoli,  (2022), coordinator E. Zappale.

\bibliographystyle{plain}

\end{document}